\theoremstyle{plain}
\newtheorem{thm}{Theorem}[subsection]
\newtheorem*{thm*}{Theorem}
\newtheorem{prop}{Proposition}[subsection]
\newtheorem*{prop*}{Proposition}
\newtheorem{lemma}{Lemma}[subsection]
\newtheorem*{lemma*}{Lemma}
\newtheorem{coro}{Corollary}[subsection]
\newtheorem*{coro*}{Corollary}
\newtheorem{conj}{Conjecture}[subsection]
\theoremstyle{definition}
\newtheorem{dfn}{Definition}[subsection]
\newtheorem*{dfn*}{Definition}
\newtheorem{rem}{Remark}[subsection]
\newtheorem*{rem*}{Remark} 
\newtheorem*{ex*}{Example}
\newtheorem{hyps}{Hypotheses}[subsection]
\newtheorem{hyp}{Hypothesis}[subsection]
\newcommand{\A}{\mathbb A}
\newcommand{\C}{\mathbb{C}}
\newcommand{\R}{\mathbb R}
\newcommand{\Z}{\mathbb Z}
\newcommand{\Q}{\mathbb Q}
\newcommand{\Gal}{\operatorname{Gal}}
\newcommand{\Hom}{\operatorname{Hom}}
\newcommand{\Aut}{\operatorname{Aut}}
\newcommand{\Res}{\operatorname{Res}}
\newcommand{\GL}{\operatorname{GL}}
\newcommand{\gr}{\operatorname{gr}}
\newcommand{\Ind}{\operatorname{Ind}}
\newcommand{\St}{\operatorname{St}}
\newcommand{\hol}{\operatorname{hol}}
\newcommand{\dR}{\operatorname{dR}}
\newcommand{\mot}{\operatorname{mot}}
\newcommand{\Coh}{\operatorname{Coh}}
\numberwithin{equation}{subsection}
\begin{document} 

\title[Critical values of $L$-functions of potentially automorphic motives]{On critical values of $L$-functions of potentially automorphic motives}
\author{Daniel Barrera Salazar}
\address{Centre de Recherches Math\'ematiques, Universit\'e de Montr\'eal, Pavillon Andr\'e-Aisenstadt, 2920, Chemin de la tour, Montr\'eal (Qu\'ebec), H3T 1J4, Canada}
\email{danielbarreras@hotmail.com}
\author{Lucio Guerberoff}
\address{Department of Mathematics, University College London, 25 Gordon Street, London WC1H 0AY, UK}
\email{l.guerberoff@ucl.ac.uk}
\subjclass[2010]{11F67 (Primary) 11S40, 11F66, 11G40, 11G18, 11R39 (Secondary). }

\begin{abstract} In this paper we prove a version of Deligne's conjecture for potentially automorphic motives, twisted by certain algebraic Hecke characters. The Hecke characters are chosen in such a way that we can use automorphic methods in the context of totally definite unitary groups. 
\end{abstract}

\maketitle

\section{Introduction}\label{sec:intro} 

The goal of this paper is to prove results on Deligne's conjecture for potentially automorphic motives, twisted by certain algebraic Hecke characters. Let $K$ be a totally real number field and $L/K$ be a CM extension. We refer the reader to Section \ref{sec:motivic} for an overview of motives and realizations. For a realization $M$ over $K$ and an algebraic Hecke character $\chi$ of $L$, we let $M(\chi)$ denote the tensor product $M\otimes\Res_{L/K}[\chi]$, where $[\chi]$ is the CM motive over $L$ attached to $\chi$. A realizations $M$ is automorphic if it looks like the conjectural motive attached to a self-dual, cohomological, cuspidal automorphic representation $\Pi$ of $\GL_{n}(\A_{K})$; for more details, see Subsection \ref{ssec:automrealiz}. A realization $M$ is potentially automorphic if there exists a finite, Galois, totally real extension $K'/K$ such that $M_{K'}$ is automorphic. Our main result (see Theorem \ref{potentially}) is the following. In the statement, $\tilde L$ is a certain finite Galois extension of $L$, which equals the Galois closure of $L$ if $M$ is automorphic. We refer the reader to the main text for more details and for any unexplained notation. 

\begin{thm*} Let $M$ be a potentially automorphic realization over $K$, with coefficients in $E$, of rank $n$, satisfying condition $(3)$ in Theorem \ref{potentially}. Let $\psi$ be a critical algebraic Hecke character of $L$ of infinity type $(m_{\tau})_{\tau \in \Hom(L, \C)}$ and weight $w$. Assume that:
	\begin{itemize}
		\item[(1)] either $n$ is even, or $n$ is odd and Hypothesis \ref{hypo:delta} is satisfied for $M$, and
		\item[(2)] $\mid m_{\tau}- m_{\overline{\tau}}\mid > \max\left\lbrace n- p_{n}(\sigma, 1)\right\rbrace_{\sigma \in \Hom(K, \C)}$ for any $\tau \in \Hom(L, \C)$.
	\end{itemize}
	Then the weak form of Deligne's conjecture up to $\Q(\psi)\tilde L$-factors is true for all critical integers $k>w+n$ of $M(\chi)$ and the embedding $1\in\Hom(E(\chi),\C)$. That is, for such integers $k$, we have
	\[ \frac{L(M(\chi),k)_{1}}{c^{+}(M(\chi)(k))_{1}}\in{(E\Q(\psi)\tilde L)^{\times}}. \]
\end{thm*}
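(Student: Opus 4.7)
The plan is to reduce to the automorphic case — already proved earlier in the paper — by combining potential automorphy with Brauer induction and Arthur--Clozel solvable descent, tracking rationality of $L$-values and of Deligne's periods throughout.

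By hypothesis there exists a finite Galois totally real extension $K'/K$ such that $M_{K'}$ is automorphic, attached to a cuspidal automorphic representation $\Pi'$ of $\GL_n(\A_{K'})$ that is automatically $\Gal(K'/K)$-invariant. Set $G = \Gal(K'/K)$. Brauer's theorem gives a decomposition $\mathbf{1}_G = \sum_i n_i\,\Ind_{H_i}^G \eta_i$ with $n_i\in\Z$, each $H_i$ elementary (hence solvable) and each $\eta_i$ a one-dimensional character. Putting $K_i = (K')^{H_i}$, Artin formalism yields
\[ L\bigl(M(\chi),s\bigr) \;=\; \prod_i L\bigl(M_{K_i}(\chi)\otimes\eta_i,\,s\bigr)^{n_i}. \]
Since $K'/K_i$ is solvable and $\Pi'$ is $\Gal(K'/K_i)$-invariant, Arthur--Clozel descent through a chain of cyclic sub-extensions produces an automorphic representation of $\GL_n(\A_{K_i})$ whose base change to $K'$ recovers $\Pi'$; the twist by the finite-order character $\eta_i$ preserves automorphy, and the infinity-type hypothesis $(2)$ on $\psi$ pulls back correctly to $L_i = LK_i$. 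The automorphic version of Deligne's conjecture established earlier (condition $(3)$ in Theorem \ref{potentially}) therefore applies to each factor and delivers algebraicity up to $\Q(\psi)\tilde L$-factors.

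In parallel, one decomposes $c^+(M(\chi)(k))_1$ via the same Brauer expression, using that Deligne's period is multiplicative under tensoring with rank-one Artin motives and behaves predictably under Weil restriction from $K_i$ down to $K$. Matching the two factorizations factor by factor yields the desired membership $L(M(\chi),k)_1/c^+(M(\chi)(k))_1 \in (E\Q(\psi)\tilde L)^\times$. The main obstacle is the period bookkeeping rather than the automorphic input: the field $\tilde L$, slightly larger than the Galois closure of $L$, is forced by the Brauer decomposition, since each intermediate field $K_i$ introduces its own CM counterpart $L_i$, and one must land in the common compositum $E\Q(\psi)\tilde L$ rather than in the a priori larger fields naturally attached to each $K_i$ separately. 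Making the $\Aut(\C/\Q)$-equivariance of $c^+$ precise under base change, Weil restriction, and twisting by Artin characters is the technically delicate step.
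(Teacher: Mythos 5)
Your overall strategy — Brauer induction combined with Arthur--Clozel solvable descent, reducing to the automorphic case already proved — is the same as the paper's. However, there is a genuine gap in the way you propose to handle the period side, and it hides the hardest ingredient.

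You propose to ``decompose $c^{+}(M(\chi)(k))_{1}$ via the same Brauer expression,'' invoking multiplicativity of Deligne periods under tensoring with Artin motives and ``predictable'' behaviour under Weil restriction. This is precisely what the paper does \emph{not} do, and for good reason: Deligne periods do not satisfy Brauer relations for free. The virtual isomorphism (\ref{brauer}) gives an exact factorization of $L$-functions by Artin formalism, but the corresponding statement for $c^{+}$ — that $c^{+}(M(\chi)(k))_{1}$ factors (up to the right field) as the corresponding virtual product of $c^{+}(M_{j}(\chi_{j})(k))_{1}^{n_{j}}$ — is itself a nontrivial period identity, essentially as hard as proving Deligne's conjecture for the ``difference'' of the two virtual realizations. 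Under Weil restriction periods pick up discriminant factors (cf.\ (\ref{form:yoshida1})--(\ref{form:yoshida2})), and under Artin twists they pick up Gauss-sum-type factors; the cancellation you would need across the alternating Brauer coefficients $n_{j}$ is exactly what has to be verified, not assumed.

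The paper sidesteps this by never forming a Brauer relation on periods directly. Instead, after Lemma \ref{cambio base caracter} reduces $L(M(\chi)_{K_{j}},s)_{1}$ to $L(M_{j}(\chi_{j}),s)_{1}$, it applies Theorem \ref{thm:main} to each factor and then re-expands \emph{each} $c^{+}(M_{j}(\chi_{j})(k))_{1}$ through Proposition \ref{prop:exprc+k} into powers of $(2\pi i)$, the factor $\delta(M_{j})_{1}$, and the CM period $p(\check\chi_{j};\Phi_{j})^{n}$. The product over $j$ is then compared to the analogous expression for $(M,\chi,k)$ using three separate compatibilities: (\ref{compatibilidad 2 pi i}) from $\sum_{j}n_{j}[K_{j}:\Q]=[K:\Q]$, (\ref{compatibilidad delta}) from $\Res_{K_{j}/K}M_{j}\cong M^{[K_{j}:K]}$ and $\sum_{j}n_{j}[K_{j}:K]=1$, and crucially (\ref{compatibilidad CM}), the identity $\prod_{j}p(\check\chi_{j};\Phi_{j})^{n_{j}}\sim_{\Q(\chi)\tilde L}p(\check\chi;\Phi)$. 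That last compatibility is proved by invoking Blasius's theorem — Deligne's conjecture for the Hecke-character motives $[\chi]$ and $[\chi_{j}]$ — to translate the Brauer relation on the $L(\chi_{j},m)$ into a relation on CM periods. Your write-up never mentions Blasius, and without it the CM periods coming from the different fields $K_{j}$ have no reason to multiply back to the one attached to $K$.

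One smaller point: you use the general Brauer induction theorem, with elementary subgroups $H_{i}$ and one-dimensional characters $\eta_{i}$, whereas the paper uses the Brauer--Salomon variant (trivial characters, quasi-elementary hence solvable subgroups). The latter avoids twisting by Artin characters altogether, so that base change reduces to $M_{j}=M_{K_{j}}$ and $\chi_{j}=\chi\circ N_{L_{j}/L}$ with no extra bookkeeping; with your version you would need an additional argument that twisting $M_{K_{i}}(\chi)$ by a finite-order Artin character still lands in a situation where Proposition \ref{prop:maincritical} and Proposition \ref{prop:exprc+k} apply and where the extra period factors cancel in the virtual product. This is another place where the ``delicate bookkeeping'' you gesture at is actually the content of the proof.
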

Here $p_{1}(\sigma, 1)>\dots>p_{n}(\sigma,1)$ are the Hodge numbers of $M$ (see Subsection \ref{ssec:polregmot}) and $\chi$ is an algebraic Hecke character constructed from $\psi$ as follows: there exists a finite order character $\psi_{0}$ of $\A_{K}^{\times}/K^{\times}$ such that $\psi|_{\A_{K}^{\times}}=\psi_{0}\|\cdot\|^{-w}$, and we put $\chi=\psi^{2}(\psi_{0}\circ N_{L/K})^{-1}$. We stress that, given $M$, there always exists algebraic Hecke characters and integers $k>w+n$ critical for $M(\chi)$ as in the statement. 

The proof of our theorem works as follows. First, suppose that $M$ is automorphic. Hypothesis (2) in the theorem allows us, using the results proved by one of the authors (\cite{guerbperiods}) generalizing earlier results of Harris (\cite{harriscrelle}) to totally real fields, to write the value of the $L$-function of $M(\chi)$ at $k$ in terms of a CM period attached to $\chi$. The way this works is by using descent to a totally definite unitary group $G$ and expressing this $L$-function as that of a cohomological automorphic representation on $G$. The corresponding critical value is expressed in terms of a CM period and a Petersson norm, which in our case turns out to be algebraic. On the other hand, the motivic computations of \cite{guerbperiods} and Hypothesis (2) allow us to express the Deligne period of $M(\chi)(k)$ in terms of a CM period, which turns out to match the previous one. Thus, we prove in this way that the value $L(M(\chi),k)_{1}$ at any critical integer $k$ of $M(\chi)$ which satisfies $k>w+n$ equals the Deligne period $c^{+}(M(\chi)(k))_{1}$, up to multiplication by an element of $E\Q(\psi)\tilde L$. The requirement that $k>w+n$ is necessary for the automorphic methods to work. In the same vein, automorphic methods only allow us to consider multiples by $E\Q(\psi)\tilde L$. This consideration is included in Conjecture \ref{conj:weakdeligne}, which is what we call the weak form of Deligne's conjecture.

When $M$ is a potentially automorphic realization over $K$, we use Brauer's induction and solvable base change for $\GL_{n}$, as developed in the theory of Arthur-Clozel (\cite{arthurclozel}). We prove the theorem for $M$ by using the previous automorphic case and further compatibilities between the CM periods that appear, which are a consequence of Deligne's conjecture for algebraic Hecke characters, proved in this case by Blasius (\cite{blasiusannals}).

Let us say a few words about the hypotheses of the theorem. As we mentioned before, a potentially automorphic realization $M$ is one such that it becomes automorphic after extension of scalars to a totally real, Galois extension $K'/K$. A number of techniques is available to prove that certain motives, or Galois representations, are potentially automorphic. We refer the reader to \cite{blgt} for some general results in this direction. From now on, for simplicity, assume that $M$ is already automorphic. In the statement of the theorem, we require $\psi$ to be critical. This means that $m_{\tau}\neq m_{\bar\tau}$ for any $\tau\in\Hom(L,\C)$. This is needed for the motivic computations in \cite{guerbperiods}. Regarding Hypothesis (1) in the theorem, it is needed to express the full period $\delta(M)$ in a convenient way. This period appears in the motivic computations that give rise to the expression of $c^{+}(M(\chi)(k))$. Automorphic realizations are endowed with a polarization, which can be used to show that $\delta(M)$ can be replaced by $(2\pi i)^{-[K:\Q]n(n-1)/2}$ when $n$ is even. Hypothesis (1) says that that when $n$ is odd, we can also replace $\delta(M)$ by the same power of $2\pi i$. This can be shown to be true if one assumes the much stronger Tate conjecture. Hypothesis (2), as we explained above, is made so that $M(\chi)$ has critical values and so that they can be expressed essentially as a CM period (in the terminology of \cite{harriscrelle}, $\chi$ belongs to the $n$-th critical interval of $M$). It is essential to the method, and a modification of it, still assuming that $M(\chi)$ has critical values, would entail the appearance of additional quadratic periods in the formulas, which are harder to relate to critical values of $L$-functions. Hypothesis (3) says that $\Pi$ has a descent to a totally definite unitary group satisfying a number of conditions. This is expected to hold in our setting, and many, if not most, cases have been already proved (\cite{labesse}, \cite{mok}, \cite{kmsw}). Combined with Hypothesis (2), this implies that we can express the $L$-function as that of a cohomological automorphic representation on a totally definite unitary group. 

While doing the motivic computations behind the expression of $c^{+}(M(\chi)(k))$, we draw some consequences of the general formula proved in \cite{guerbperiods} for the case of arbitrary critical intervals. More precisely, if $M$ is a regular, polarized realization, then after fixing embeddings of the coefficient fields into $\C$, we construct certain algebraic Hecke characters $\chi$ with the property that $M(\chi)$ has critical values which can be expressed in terms of CM periods and additional quadratic periods $Q_{j}(M)$ attached to $M$. The set of quadratic periods appearing in the expression depends on the critical interval of $\chi$, which can be arbitrarily prescribed. Combining the formulas for different characters $\chi$ gives an expression of the quadratic periods in terms of quotients of Deligne periods of various twists $M(\chi)$ and CM periods (see Proposition \ref{prop:Qjota} and Corollary \ref{coro:Qjota}). Assuming Deligne's conjecture for these motives, we can then express the quadratic periods in terms of critical values of $L$-functions and certain Gauss sums (Proposition \ref{qj formula}). These expressions should be helpful in certain applications, and we plan to exploit this in a future work in relation to $p$-adic interpolation and $p$-adic $L$-functions.

To finish this introduction we would like to say a few words about the background and motivation for this work. The study of values of $L$-functions at integers is a subject with a long history starting with Euler. Based on experimental results, in $1979$ Deligne proposed a general conjecture relating the values of motivic $L$-functions at certain integer points to periods of integrals. Arguably the most useful way to approach this and other conjectures on critical values of $L$-functions is by using automorphic methods. In this direction, let us mention Blasius's result (\cite{blasiusannals}), which proves the conjecture for the motives attached to algebraic Hecke characters of CM fields (see also \cite{hardersch}). Let us also mention Shimura's works on critical values of $L$-functions in the case of modular forms and Hilbert modular forms (see for instance \cite{shimurazeta} and \cite{shimuraduke}), based on the doubling method. Following these works, Harris in \cite{harriscrelle} treated the case of the polarized regular motives over $\Q$ coming from automorphic representations of $\GL_{n}(\A_{\Q})$. Finally, in \cite{guerbperiods}, the author generalized Harris's results over arbitrary totally real number fields. The present work is based on \cite{guerbperiods}.

\subsection{Organization of the paper} Here is the outline of the paper. Section \ref{sec:motivic} is devoted to the motivic considerations of this work. The main result of this section is a formula for the Deligne period of $M(\chi)(k)$ in terms of CM periods and powers of $2\pi i$, based on \cite{guerbperiods} (Proposition \ref{prop:exprc+k}). As mentioned above, with an eye towards future applications, we also include a formula for the quadratic periods of a regular, polarized realization in terms of quotients of various twists of $M$ and CM periods (Proposition \ref{prop:Qjota} and its corollary). In Section \ref{sec:automorphic} we specialize to the case of totally definite unitary groups the results of \cite{guerbperiods} on critical values of automorphic $L$-functions (Proposition \ref{prop:maincritical}). In Section \ref{sec:deligne} we combine the previous sections and prove the main result of this paper (Theorems \ref{thm:main} and \ref{potentially} for automorphic and potentially automorphic realizations respectively). Finally, in Section \ref{sec:quadratic}, which is more speculative, we write down an expression of the quadratic periods in terms of critical values of $L$-functions (Proposition \ref{qj formula}), which follows from assuming Deligne's conjecture for the corresponding motives.

\subsection*{Acknowledgements} The second author wants to thank Adrian Iovita and the Centre de Recherches Math\'ematiques in Montr\'eal for supporting his visit in Spring 2016, during which this work was carried out. 

\subsection*{Notation and conventions}
We fix an algebraic closure $\C$ of $\R$, a choice of $i=\sqrt{-1}$, and we let $\bar\Q$ denote the algebraic closure of $\Q$ in $\C$. We let $c\in\Gal(\C/\R)$ denote complex conjugation on $\C$, and we use the same letter to denote its restriction to $\bar\Q$. We write $c(z)=\bar z$ for $z\in\C$.

For a number field $K$, we let $\A_{K}$ and $\A_{K,f}$ denote the rings of ad\`eles and finite ad\`eles of $K$ respectively. When $K=\Q$, we write $\A=\A_{\Q}$ and $\A_{f}=\A_{\Q,f}$. After fixing an algebraic closure $\bar K$ of $K$, we denote $\Gamma_{K}= \Gal(\bar K/K)$. 

A CM field $L$ is a totally imaginary quadratic extension of a totally real field $K$. A CM type $\Phi$ for $L/K$ is a subset $\Phi\subset\Hom(L,\C)$ such that $\Hom(L,\C)=\Phi\coprod c\Phi$ (equivalently, a choice of one of the two possible extensions to $L$ of each embedding of $K$ in $\C$).

All vector spaces over fields will be finite-dimensional except otherwise stated. A tensor product without a subscript between $\Q$-vector spaces will always mean tensor product over $\Q$. For any number field $K$, we denote by $J_{K}=\Hom(K,\C)$. For $\sigma\in J_{K}$, we let $\bar\sigma=c\sigma$. Let $E$ and $K$ be number fields, and $\sigma\in J_K$. If $\alpha,\beta\in E\otimes \C$, we write $\alpha\sim_{E\otimes K,\sigma}\beta$ if either $\beta=0$ or if $\beta\in(E\otimes \C)^{\times}$ and $\alpha/\beta\in(E\otimes\sigma(K))^{\times}$. There is a natural isomorphism
$E\otimes\C\simeq\prod_{\varphi\in J_E}\C$ given by $e\otimes z\mapsto(\varphi(e)z)_{\varphi}$ for $e\in E$ and $z\in\C$. Under this identification, we denote an element $\alpha\in E\otimes\C$ by $(\alpha_{\varphi})_{\varphi\in J_E}$. When $K$ is given from the context as a subfield of $\C$, we write $\sim_{E\otimes K}$ for $\sim_{E\otimes K,1}$, where $1:K\hookrightarrow\C$ is the given embedding. 

Suppose that $\mathbf{r}=(r_{\varphi})_{\varphi\in J_E}$ is a tuple of nonnegative integers. Given $Q_{1},\dots,Q_{n}$ in $E\otimes\C$ (with $n\geq r_{\varphi}$ for all $\varphi$), we denote by
\[ \prod_{j=1}^{\mathbf{r}}Q_{j}\in E\otimes\C \]
the element whose $\varphi$-th coordinate is $\prod_{j=1}^{r_{\varphi}}Q_{j,\varphi}$

\section{Motives and periods}\label{sec:motivic}

\subsection{Algebraic Hecke characters} Let $L$ be a number field. For a place $v$ of $L$, we denote by $L_{v}$ the corresponding completion, and by $(L_{v}^{\times})^{+}$ the connected component of the identity in $L_{v}^{\times}$. An algebraic Hecke character of $L$ is a continuous character $\chi:L^{\times}\backslash\A_{L}^{\times}\to\C^{\times}$, with the property that for each embedding $\tau\in J_{L}$, there exists an integer $n_{\tau}$ such that if $v$ is the archimedean place of $L$ induced by $\tau$, then for every $x\in(L_{v}^{\times})^{+}$, 
\[ \chi(x)=\left\{\begin{array}{lll}\tau(x)^{-n_{\tau}} & \text{if} & v \text{ is real},\\
                                                   \tau(x)^{-n_{\tau}}\bar\tau(x)^{-n_{\bar\tau}} & \text{if} & v\text{ is complex}.\end{array}\right.\]
The integer $n_{\tau}+n_{\bar\tau}$ is independent of $\tau$, and it's called the weight $w(\chi)$ of $\chi$. The tuple $(n_{\tau})_{\tau\in J_{L}}$ is called the infinity type of $\chi$. Let $\Q(\chi)\subset\C$ be the field generated by the values of $\chi$ on the finite id\`eles $\A_{L,f}^{\times}$. Then $\Q(\chi)$ is either $\Q$ or a CM field. 

From now on, assume that $L$ is a CM field, which is the case that we will be interested in. Let $K$ denote the maximal totally real subfield of $L$. The restriction of $\chi$ to $K$ must necessarily be of the form
\[ \chi|_{\A_{K}^{\times}}=\chi_{0}\|\cdot\|^{-w(\chi)},\]
where $\|\cdot\|$ is the id\`elic norm on $\A_{K}^{\times}$, and $\chi_{0}$ is of finite order (see \cite{schbook}, Chapter 0).

Let $\Phi$ be a CM type for $L/K$, that is, $\Phi\subset J_{L}$ consists of a choice of one of the two possible extensions of each $\sigma\in J_{K}$ to $L$. 

\begin{prop}\label{prop:existechi} Let $(a_{\tau})_{\tau\in\Phi}$ be a tuple of integers such that
\[ a_{\tau}\equiv a_{\tau'}(2) \]
for every $\tau,\tau'\in\Phi$. Then there exists an algebraic Hecke character $\chi$ of $L$ of infinity type $(n_{\tau})_{\tau\in J_{L}}$ such that
\[ n_{\tau}-n_{\bar\tau}=a_{\tau}\quad(\tau\in\Phi).\]
Moreover, if $w_{0}\in\Z$ satisfies that $w_{0}\equiv a_{\tau}(2)$ for one (or every) $\tau\in J_{L}$, then $\chi$ can be taken to have weight $w_{0}$.
\begin{proof}
As in \cite{schbook}, Chaper 0, any tuple of integers $(n_{\tau})_{\tau\in J_{L}}$ with the property that $n_{\tau}+n_{\bar\tau}$ is independent of $\tau$ is the infinity type of an algebraic Hecke character $\chi$ of $L$. To arrive at the conditions of the proposition, choose an arbitrary $w_{0}$, with the same parity as the $a_{\tau}$, and take the tuple $(n_{\tau})_{\tau\in\Phi}$ as
\[ n_{\tau}=\frac{w_{0}+a_{\tau}}{2},\quad\tau\in\Phi,\]
\[ n_{\tau}=\frac{w_{0}-a_{\bar\tau}}{2},\quad\tau\not\in\Phi.\]
\end{proof}
\end{prop}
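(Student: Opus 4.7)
The plan is to reduce the statement to the general existence theorem cited from Schappacher (\cite{schbook}, Chapter 0): any tuple $(n_{\tau})_{\tau \in J_{L}}$ of integers with $n_{\tau} + n_{\bar\tau}$ independent of $\tau$ arises as the infinity type of some algebraic Hecke character of $L$. Granting this, the proof reduces to writing down an explicit tuple $(n_{\tau})$ that matches the prescribed differences on $\Phi$ and, in the ``moreover'' part, has the prescribed weight.

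First I would fix $w_{0} \in \Z$ having the same parity as the common parity class of the $a_{\tau}$; such an integer exists by the hypothesis $a_{\tau} \equiv a_{\tau'} \pmod 2$, and in the ``moreover'' clause $w_{0}$ is simply given. I would then define
\[ n_{\tau} = \frac{w_{0} + a_{\tau}}{2} \text{ for } \tau \in \Phi, \qquad n_{\tau} = \frac{w_{0} - a_{\bar\tau}}{2} \text{ for } \tau \notin \Phi. \]
The parity assumption ensures that both expressions are integers, so this is a well-defined tuple in $\Z^{J_{L}}$.

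The verification that this tuple has the required properties is then routine. For $\tau \in \Phi$, a direct computation gives $n_{\tau} + n_{\bar\tau} = (w_{0}+a_{\tau})/2 + (w_{0}-a_{\tau})/2 = w_{0}$, and the symmetric case $\tau \notin \Phi$ yields the same value, so the sum $n_{\tau} + n_{\bar\tau}$ is constantly equal to $w_{0}$. This makes the tuple eligible for the cited existence theorem and guarantees that the resulting character has weight $w_{0}$. For $\tau \in \Phi$, the difference $n_{\tau} - n_{\bar\tau} = (w_{0}+a_{\tau})/2 - (w_{0}-a_{\tau})/2 = a_{\tau}$ matches the prescribed data.

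Applying the cited existence theorem to this tuple produces the Hecke character $\chi$ asserted by the proposition. There is no substantive obstacle in this argument: all the nontrivial content lies in Schappacher's existence statement, which reconstructs $\chi$ from a compatible infinity type, and what remains is a short bookkeeping argument compatible with the constraint $n_{\tau} + n_{\bar\tau} = w(\chi)$ forced by the definition of an algebraic Hecke character.
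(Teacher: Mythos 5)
Your proof is correct and follows essentially the same route as the paper's: both reduce to the cited existence theorem from Schappacher and define the same explicit tuple $n_{\tau}=(w_{0}+a_{\tau})/2$ for $\tau\in\Phi$ and $n_{\tau}=(w_{0}-a_{\bar\tau})/2$ otherwise. Your version is slightly more explicit in spelling out the routine verifications that $n_{\tau}+n_{\bar\tau}=w_{0}$ and $n_{\tau}-n_{\bar\tau}=a_{\tau}$, which the paper leaves to the reader.
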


\subsection{Polarized regular motives}\label{ssec:polregmot} In this subsection, we recall the main result of \cite{guerbperiods}, Section 2. Let $K$ be a totally real number field, $L/K$ a CM extension and $E$ any number field. We are fixing throughout an algebraic closure $\bar K$ of $K$, and we take $L$ inside $\bar K$. Let $M$ be a realization over $K$ of rank $n$ with coefficients in $E$, pure of weight $w(M)$. We refer to \cite{guerbperiods} for a basic overview of realizations. Here we stress that $M$ consists of a collection of vector spaces and comparison isomorphisms, the interesting cases being given by collections of realizations coming from motives for absolute Hodge cycles over $K$, as in Deligne's article \cite{deligne}. For each $\sigma\in J_{K}$, we can define the period $\delta_{\sigma}(M)$. These are elements of $(E\otimes\C)^{\times}$, well defined up to multiplication by an element of $(E\otimes\sigma(K))^{\times}$. We let $\delta(M)=\delta_{1}(\Res_{K/\Q}M)$, with $1\in J_{\Q}$ being the unique embedding of $\Q$. 

We say that $M$ is {\em special} if, for every $\sigma\in J_{K}$, the action of the Frobenius automorphism $F_{\sigma}$ on the Hodge component $M_{\sigma}^{w(M)/2,w(M)/2}$ is given by a scalar $\varepsilon=\pm1$ (independent of $\sigma$). Here the $E$-vector space $M_{\sigma}$ is the Betti realization of $M$ attached to the embedding $\sigma$. Under the condition that $M$ is special, we can define the Deligne $\sigma$-periods $c_{\sigma}^{\pm}(M)$, again elements of $(E\otimes\C)^{\times}$ well defined modulo $(E\otimes\sigma(K))^{\times}$. We let $c^{\pm}(M)=c^{\pm}(\Res_{K/\Q}M)$. We define $n^{\pm}=\dim_{E}M_{\sigma}^{\pm}$, where
\[ M_{\sigma}^{\pm}=\{x\in M_{\sigma}:F_{\sigma}(x)=\pm x\}.\]
This is independent of $\sigma$. The following factorization formula is proved in \cite{yoshida} or \cite{panch} (we also include a similar formula for the $\delta$'s):
\begin{align}\label{form:yoshida1} c^{\pm}(M)\sim_{E} D_{K}^{n^{\pm}/2}\prod_{\sigma}c^{\pm}_{\sigma}(M),\\
\label{form:yoshida2} \delta(M)\sim_{E} D_{K}^{n/2}\prod_{\sigma}\delta_{\sigma}(M).\end{align}
Here $D_{K}$ is the discriminant of $K$. Note that $D_{K}$ is a positive integer such that $D_{K}^{1/2}\in K^{\Gal}$, where $K^{\Gal}\subset\bar\Q$ is the Galois closure of $K$ in $\bar\Q$. 

Under the assumption that the system of $\lambda$-adic representations $(M_{\lambda})_{\lambda}$ is strictly compatible, we can define the $L$-function of $M$, $L^{*}(M,s)=(L(M,s)_{\varphi})_{\varphi\in J_{E}}\in E\otimes\C$ (see \cite{deligne}). We will always assume this to be the case. We also refer the reader to {\em op. cit.} for the definition of critical integers. Deligne's conjecture is the statement saying that if $M$ is critical, meaning that $0$ is a critical integer, then
\[ \frac{L^{*}(M,0)}{c^{+}(M)}\in E.\]
We will later prove, for certain realizations $M$, a weaker version of this conjecture which we state as follows. Note that if $k$ is a critical integer for $M$, then the $k$-th Tate twist $M(k)$ is critical, and $L^{*}(M(k),0)=L^{*}(M,k)$. Also, $c^{+}(M(k))\sim(2\pi i)^{[K:\Q]n^{\pm}k}c^{\pm}(M)$, where $\pm=(-1)^{k}$.

\begin{conj}\label{conj:weakdeligne} Let $F\subset\C$ be a number field and $\varphi\in J_{E}$. Let $k$ be a critical integer of $M$. Then
	\[ \frac{L(M,k)_{\varphi}}{c^{+}(M(k))_{\varphi}}\in F\varphi(E).\]
\end{conj}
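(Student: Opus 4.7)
Conjecture \ref{conj:weakdeligne} is a form of Deligne's original conjecture weakened by allowing multiplication by an element of an arbitrary auxiliary number field $F$, and is open in full generality; my plan therefore outlines the automorphic strategy that underlies the rest of the paper, while flagging where additional hypotheses seem unavoidable.

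The first step is to reduce to the case in which $M$ is attached to a self-dual cohomological cuspidal representation $\Pi$ of $\GL_{n}(\A_{K})$. For potentially automorphic realizations this is done by Brauer induction over a totally real Galois extension $K'/K$ together with Arthur--Clozel solvable base change; the freedom to multiply by an element of $F$ absorbs the fields of definition of the Artin characters that appear in Brauer's decomposition and of the Galois closures entering when comparing $L$-values and Deligne periods across the tower. The second step is to express both sides of the ratio in a common motivic language. On the analytic side I would descend $\Pi$ to a totally definite unitary group $G$ and invoke the integral representation of \cite{guerbperiods}, which writes $L(M,k)_{\varphi}$ as an algebraic multiple of a product of CM periods times a Petersson inner product; the Petersson norm is algebraic on $G$ since the associated symmetric space is finite. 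On the motivic side, Proposition \ref{prop:exprc+k} rewrites $c^{+}(M(k))_{\varphi}$ as a product of the same CM periods, powers of $2\pi i$, a power of $D_{K}^{1/2}$, and possibly quadratic periods $Q_{j}(M)$ coming from the factorizations (\ref{form:yoshida1})--(\ref{form:yoshida2}). Matching the two expressions and identifying the CM periods with critical values of Hecke $L$-functions via Blasius's theorem places the ratio in a compositum of $\varphi(E)$ with the fields of definition of the Hecke characters and $\tilde L$, which is an admissible $F\varphi(E)$.

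The main obstacles are precisely the points around which the paper's main theorem legislates. The integral representation in \cite{guerbperiods} covers only critical integers $k > w(M)+n$; below this range one would have to exploit the functional equation and complex conjugation, and these cannot be invoked uniformly for an arbitrary critical $k$. The quadratic periods $Q_{j}(M)$ cancel only when the Hecke character used to twist $M$ into an accessible range lies in the $n$-th critical interval of $M$, which is the origin of the bound on $|m_{\tau}-m_{\bar\tau}|$ in the main theorem; outside that interval one would have to exploit Proposition \ref{prop:Qjota}, whose applicability itself depends on Deligne's conjecture for auxiliary twists. Finally, for $n$ odd one needs Hypothesis \ref{hypo:delta}, a consequence of the Tate conjecture, in order to rewrite $\delta(M)$ as a power of $2\pi i$; removing that dependence appears to fall outside current technology. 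In sum, my plan reduces Conjecture \ref{conj:weakdeligne} to the conjunction of Deligne's conjecture for CM motives (known by Blasius), a Tate-type input on $M$, and an unconditional automorphy statement for $M$ valid at the integer $k$ in question.
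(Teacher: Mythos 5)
The target statement is labeled as a \emph{Conjecture} in the paper and is not proved there in general; what the paper proves (Theorems \ref{thm:main} and \ref{potentially}) is this conjecture only for the twists $M(\chi)$ of (potentially) automorphic realizations, at critical integers $k>w+n$ and under Hypotheses (1)--(3), with $F$ the explicit field $\Q(\psi)L^{\Gal}$ (resp.\ $\Q(\psi)\tilde L$). You have correctly recognized this, and your sketch — reduction to the automorphic case via Brauer--Salomon and Arthur--Clozel solvable base change, descent of $\Pi$ to a totally definite unitary group where the Petersson norm is algebraic, expression of $c^{+}$ via Proposition \ref{prop:exprc+k} and of the $L$-value via Proposition \ref{prop:maincritical}, matching CM periods through Blasius, the need for the $n$-th critical interval condition to avoid quadratic periods, and the role of Hypothesis \ref{hypo:delta} for odd $n$ — is exactly the paper's strategy. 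One small imprecision: in the Brauer--Salomon decomposition (\ref{brauer}) only the trivial characters $1_{\Gal(K'/K_j)}$ appear, so there are no Artin characters whose fields of definition need to be absorbed into $F$; the enlargement to $\tilde L$ comes from the Galois closures $L_j^{\Gal}$ of the CM fields $L_j=LK_j$ that enter when Proposition \ref{prop:exprc+k} and Blasius's theorem are applied over each intermediate field $K_j$.
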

We will usually refer to Conjecture \ref{conj:weakdeligne} for $(M,\varphi,F,k)$ as the {\em the weak form of Deligne's conjecture} up to $F$-factors for $\varphi$ and the critical integer $k$.

We say that the realization $M$ is {\em regular} if, for every $\sigma\in J_{K}$ and $\varphi\in J_{E}$, the spaces $M_{\sigma}^{pq}(\varphi)$ have dimension at most $1$ over $\C$. Associated to the pair $(\sigma,\varphi)$, there is a sequence of integers
\[ p_{1}(\sigma,\varphi)>\dots>p_{n}(\sigma,\varphi) \] 
with the property that $M_{\sigma}^{pq}(\varphi)\neq 0$ if and only if $p=p_{i}(\sigma,\varphi)$ for some $i=1,\dots,n$. We let $q_{i}(\sigma,\varphi)=w(M)-p_{i}(\sigma,\varphi)$, which in fact equals $p_{n+1-i}(\sigma,\varphi)$. We let $p_{0}(\sigma,\varphi)=+\infty$ and $p_{n+1}(\sigma,\varphi)=-\infty$. Note that if $n=2k-1$ is odd, then $p_{k}(\sigma,\varphi)=p_{n+1-k}(\sigma,\varphi)$, which implies that $w(M)$ is even. In particular, $w(M)n$ is even in all cases.

Let $\chi:L^{\times}\backslash\A_{L}^{\times}\to\C^{\times}$ be an algebraic Hecke character of $L$ of infinity type $(n_{\tau})_{\tau\in J_{L}}$. Attached to $\chi$ is a CM motive over $L$ with coefficients in $\Q(\chi)$, which we denote by $[\chi]$ (this is denoted by $M(\chi)$ in \cite{guerbperiods}). See \cite{schbook} for the construction of $[\chi]$. We let $\Res_{L/K}[\chi]$ be the motive over $K$ obtained by restriction of scalars from $L$ to $K$ of $[\chi]$. We say that $\chi$ is {\em critical} if $n_{\tau}\neq n_{\bar\tau}$ for every $\tau\in J_{L}$. In this case, $\Res_{L/K}[\chi]$, a realization of rank $2$, is regular in the sense defined above, and we denote the corresponding Hodge numbers by 
\[ p_{1}^{\chi}(\sigma,\rho)>p_{2}^{\chi}(\sigma,\rho),\]
where $\rho\in J_{\Q(\chi)}$. If we let $n(\tau,\rho)=n_{\tilde\rho^{-1}\tau}$, where $\tilde\rho:\C\to\C$ is an extension of $\rho$ to $\C$, then 
\[ \{p_{1}^{\chi}(\sigma,\rho),p_{2}^{\chi}(\sigma,\rho)\}=\{n(\tau,\rho),n(\bar\tau,\rho)\},\]
where $\tau$ and $\bar\tau$ are the two embeddings of $L$ extending $\sigma$. For $\chi$ critical, we define
\[ t_{\sigma,\rho}(\chi)=p_{1}^{\chi}(\sigma,\rho)-p_{2}^{\chi}(\sigma,\rho).\]
We let $c_{\sigma}^{\pm}(\chi)=c_{\sigma}^{\pm}(\Res_{L/K}[\chi])$. For any $\tau\in J_{L}$, let $e_{\tau}=(e_{\tau,\rho})_{\rho\in J_{\Q(\chi)}}\in(\Q(\chi)\otimes\C)^{\times}$ be the element whose $\rho$-coordinate is $e_{\tau,\rho}=1$ if $n(\tau,\rho)>n(\bar\tau,\rho)$ and $e_{\tau,\rho}=-1$ if $n(\tau,\rho)<n(\bar\tau,\rho)$. Recall that the restriction of $\chi$ to $\A_{K}^{\times}$ can be written as
\[ \chi|_{\A_{K}^{\times}}=\chi_{0}\|\cdot\|^{-w(\chi)},\]
where $\chi_{0}$ is a finite order character. We let $\varepsilon_{L}$ denote the finite order character of $K^{\times}\setminus\A_{K}^{\times}$ corresponding under class field theory to the quadratic character of $\Gal(\bar K/K)$ associated with the extension $L/K$. We let $[\chi_{0}\varepsilon_{L}]$ denote the Artin motive of rank $1$ over $K$, with coefficients in $\Q(\chi)$, attached to the finite order character $\chi_{0}\varepsilon_{L}$. For each $\sigma\in J_{K}$, we let
\[ G_{\sigma}(\chi)=\delta_{\sigma}[\chi_{0}\varepsilon_{L}].\]

We let $M(\chi)=M\otimes \Res_{L/K}[\chi]$ be the tensor product of the realizations $M$ and $\Res_{L/K}[\chi]$. This tensor product is over $\Q$, meaning that it has coefficients in $E(\chi)=E\otimes\Q(\chi)$. To be more precise, $E(\chi)$ is a product of number fields, and $M(\chi)$ is a collection of realizations with coefficients in each of these fields. For simplicity in most of what follows, we will simply assume that $E(\chi)$ is a field. Suppose that $M$ is regular and $\chi$ is critical. It's shown in \cite{guerbperiods} (Proposition 2.5.1) that $M(\chi)$ has critical values if and only if, for every $\sigma\in J_{K}$ and every $\rho\in J_{\Q(\chi)}$, 
\[ t_{\sigma,\rho}(\chi)\neq w(M)-2p_{i}(\sigma,\varphi) \]
for any $i=1,\dots,n$ and any $\varphi\in J_{E}$. Assuming this is the case, we can find integers $r_{\sigma,\varphi,\rho}(\chi)\in\{0,\dots,n\}$ such that
\[ w(M)-2p_{r_{\sigma,\varphi,\rho}(\chi)}(\sigma,\varphi)<t_{\sigma,\rho}(\chi)<w(M)-2p_{r_{\sigma,\varphi,\rho}(\chi)+1}(\sigma,\varphi).\]
We stress here that if $K\neq\Q$, the numbers $p_{i}(\sigma,\varphi)$ depend in general on the choice of the embeddings. Moreover, the integers $r_{\sigma,\varphi,\rho}(\chi)$ also depend on the choice of the three embeddings $\sigma$, $\varphi$ and $\rho$.

We say that $M$ is {\em polarized} if there exists a non-degenerate morphism of realizations 
\[ \langle,\rangle:M\otimes_{E}M\to E(-w(M)). \]
We also impose the condition that $\langle,\rangle$ is symmetric if $w(M)$ is even, and alternated if $w(M)$ is odd. We refer to Subsection 2.3 of \cite{guerbperiods} for the definition of the quadratic periods $Q_{j,\sigma}\in(E\otimes\C)^{\times}$ attached to a polarized realization $M$. Assume from now on that $M$ is a regular, polarized, special realization, pure of weight $w(M)$ and rank $n$. 

Let $\chi$ be an critical algebraic Hecke character of infinity type $(n_{\tau})_{\tau\in J_{L}}$ and weight $w(\chi)$. We suppose that $M(\chi)$ has critical values, and we let $s_{\sigma,\varphi,\rho}(\chi)=n-r_{\sigma,\varphi,\rho}(\chi)$, $\mathbf{r}_{\sigma}=(r_{\sigma,\varphi,\rho}(\chi))_{\varphi,\rho}$ and $\mathbf{s}_{\sigma}=(s_{\sigma,\varphi,\rho}(\chi))_{\varphi,\rho}$. We let 
\[ a_{\sigma}^{\pm}(\chi)=(2\pi i)^{w(\chi)}G_{\sigma}(\chi)^{-1}c_{\sigma}^{\mp}(\chi) \]
and
\[ Q_{\sigma}(\chi)=(2\pi i)^{w(\chi)}G_{\sigma}(\chi)^{-2}e_{\tau}c_{\sigma}^{+}(\chi)^{2}.\]
Here $\tau$ is any of the two embeddings of $L$ extending $\sigma$. Since $e_{\tau}=-e_{\bar\tau}$, this definition makes sense in $(\Q(\chi)\otimes\C)^{\times}$ modulo $(\Q(\chi)\otimes\sigma(K))^{\times}$. 

\begin{rem} These quantities are defined more conceptually in \cite{guerbperiods}. The formulas above are obtained (up to multiples in $(\Q(\chi)\otimes \sigma(K))^{\times}$) in (2.4.3) and Proposition 2.5.2 of {\em op. cit.}.
\end{rem}

Theorem 2.5.1 of \cite{guerbperiods} (see also Proposition 1.7.6 of \cite{harriscrelle} when $K=\Q$) says that 
\begin{equation}\label{eqn:formppal} c_{\sigma}^{+}(M(\chi))\sim(2\pi i)^{-\lceil n/2\rceil w(\chi)}G_{\sigma}(\chi)^{\mathbf{r}_{\sigma}}\delta_{\sigma}(M)a_{\sigma}^{*}(\chi)Q_{\sigma}(\chi)^{\mathbf{r}_{\sigma}-\lceil n/2\rceil}\prod_{j=1}^{\mathbf{s}_{\sigma}}Q_{j,\sigma}, \end{equation}
where $a_{\sigma}^{*}(\chi)=1$ if $n$ is even, and $a_{\sigma}^{*}(\chi)=a_{\sigma}^{\pm}(\chi)$ if $n$ is odd, with $\pm=-$ if $n^{+}>n^{-}$ and $\pm=+$ if $n^{-}>n^{+}$. In the formula, $\sim$ means $\sim_{E(\chi)\otimes K,\sigma}$. In particular, if we look at the coordinates of the elements corresponding to an embedding $\varphi\in J_{E}$ and the embedding $1\in J_{\Q(\chi)}$, we get
\begin{equation}\label{eqn:formppal2} c_{\sigma}^{+}(M(\chi))_{\varphi,1}\sim\end{equation}\[(2\pi i)^{-\lceil n/2\rceil w(\chi)}G_{\sigma}(\chi)^{r_{\sigma,\varphi,1}(\chi)}_{1}\delta_{\sigma}(M)_{\varphi}a_{\sigma}^{*}(\chi)_{1}Q_{\sigma}(\chi)^{r_{\sigma,\varphi,1}(\chi)-\lceil n/2\rceil}_{1}\prod_{j=1}^{s_{\sigma,\varphi,1}(\chi)}Q_{j,\sigma,\varphi}, \]
where now both sides are complex numbers and $\sim$ stands for $\sim_{\varphi(E)\Q(\chi)\sigma(K)}$. When $E$ is given as a subfield of $\C$ with a given embedding $1\in J_{E}$, we write $c^{+}(M(\chi))_{1}$ for $c^{+}(M(\chi))_{1,1}$, and similarly for any other element of $E(\chi)\otimes\C$.

\subsection{Twists with prescribed critical intervals} In this subsection, we consider twists of $M$ by certain algebraic Hecke characters with prescribed $r$. Let $M$ be a polarized, regular, special polarization over $K$ with coefficients in $E$, of rank $n$ and pure of weight $w(M)$. Fix a CM type $\Phi$ for $L/K$, $\varphi\in J_{E}$ and $r\in\{0,\dots,n\}$. If $r>0$, let $(a_{\tau}^{(r,\varphi)})_{\tau\in\Phi}$ be the tuple of integers defined by
\begin{equation}\label{eqn:defnatau} a_{\tau}^{(r,\varphi)}=w(M)-2p_{r}(\sigma,\varphi)+1,\end{equation}
where $\sigma$ is the restriction of $\tau$ to $K$. For $r=0$, let $a^{(0)}$ be any integer such that
\[ a^{(0)}<\min\{w(M)-2p_{1}(\sigma,\varphi)\}_{\sigma\in J_{K},\varphi\in J_{E}}, \]
\begin{equation}\label{eqn:paritya0} a^{(0)}\equiv w(M)+1(2).\end{equation}

For any $\tau$ and $\varphi$, we let $a_{\tau}^{(0,\varphi)}=a^{(0)}$. Note that $w(M)-2p_{1}(\sigma,\varphi)=p_{n}(\sigma,\varphi)-p_{1}(\sigma,\varphi)\leq 0$, and hence $a_{\tau}^{(0,\varphi)}<0$. In all cases, we have that
\[ a_{\tau}^{(r,\varphi)}\equiv a_{\tau'}^{(r,\varphi)}(2)\quad(\tau,\tau'\in\Phi),\]
and thus by Proposition \ref{prop:existechi}, there exists an algebraic Hecke character $\chi^{(r,\varphi)}$ of $L$, of infinity type $(n_{\tau}^{(r,\varphi)})_{\tau\in\Phi}$, such that 
\[ n_{\tau}^{(r,\varphi)}-n_{\bar\tau}^{(r,\varphi)}=a_{\tau}^{(r,\varphi)}\quad(\tau\in\Phi).\]

\begin{lemma}\label{lemma:propschi} \begin{enumerate}[(i)]
\item The algebraic Hecke characters $\chi^{(r,\varphi)}$ are all critical, except for $r=n/2$ when $n$ is even. In this case, $\chi^{(n/2,\varphi)}$ is critical if and only if $p_{\frac{n}{2}}(\sigma,\varphi)\neq p_{\frac{n}{2}+1}(\sigma,\varphi)+1$ for any $\sigma\in J_{K}$.
\item The infinity type satisfies that $n_{\tau}^{(r,\varphi)}>n_{\bar\tau}^{(r,\varphi)}$ for one (or every) $\tau\in\Phi$ if and only if $r\in\{\lfloor\frac{n}{2}\rfloor+1,\dots,n\}$.
\end{enumerate}
\begin{proof} If $r=0$, then $a_{\tau}^{(0,\varphi)}<0$, which implies that $n_{\tau}^{(0,\varphi)}\neq n_{\bar\tau}^{(0,\varphi)}$ for every $\tau\in\Phi$, so $\chi^{(0,\varphi)}$ is critical. From now on assume that $r>0$. Then $n_{\tau}^{(r,\varphi)}-n_{\bar\tau}^{(r,\varphi)}=w(M)-2p_{r}(\sigma,\varphi)+1$ for $\tau\in\Phi$. Suppose that $\chi^{(r,\varphi)}$ is not critical, so that $n_{\tau}^{(r,\varphi)}=n_{\bar\tau}^{(r,\varphi)}$ for some $\tau\in\Phi$. Then
\[ w(M)=2p_{r}(\sigma,\varphi)-1.\]
Since $w(M)=p_{n+1-r}(\sigma,\varphi)+p_{r}(\sigma,\varphi)$, this amounts to say that 
\begin{equation}\label{eqn:pn+1} p_{n+1-r}(\sigma,\varphi)=p_{r}(\sigma,\varphi)-1.\end{equation}
This necessarily implies that $r\neq n$ and
\[ p_{n+1-r}(\sigma,\varphi)=p_{r+1}(\sigma,\varphi).\]
This in turn implies that $n+1-r=r+1$, so that $n$ is even and $r=n/2$. Then (\ref{eqn:pn+1}) implies that $p_{\frac{n}{2}}(\sigma,\varphi)=p_{\frac{n}{2}+1}(\sigma,\varphi)+1$. This proves part (i). 

For part (ii), the condition is equivalent to $a_{\tau}^{(r,\varphi)}>0$ for every $\tau\in\Phi$, which means that $r\neq 0$ and 
\[ p_{n+1-r}(\sigma,\varphi)-p_{r}(\sigma,\varphi)\geq 0 \]
for every $\sigma\in J_{K}$. This means that $r\geq\frac{n+1}{2}$, or, what is the same, $r\geq\lfloor\frac{n}{2}\rfloor+1$.

\end{proof}
\end{lemma}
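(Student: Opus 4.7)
The plan is to reduce both parts to elementary inequalities among the Hodge numbers, using two structural facts: the polarization on $M$ forces the symmetry $p_i(\sigma,\varphi)+p_{n+1-i}(\sigma,\varphi)=w(M)$ (this is the relation $q_i=p_{n+1-i}$ recorded after the definition of regularity), and by regularity the sequence $p_1(\sigma,\varphi)>\cdots>p_n(\sigma,\varphi)$ is strictly decreasing. With these, for $r\geq 1$ I can rewrite
\[ a_\tau^{(r,\varphi)} = w(M)-2p_r(\sigma,\varphi)+1 = p_{n+1-r}(\sigma,\varphi)-p_r(\sigma,\varphi)+1. \]

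For part (i), by definition criticality of $\chi^{(r,\varphi)}$ amounts to $n_\tau^{(r,\varphi)}\neq n_{\bar\tau}^{(r,\varphi)}$ for every $\tau\in J_L$, i.e.\ $a_\tau^{(r,\varphi)}\neq 0$ for $\tau\in\Phi$. The case $r=0$ is immediate since $a^{(0)}<0$ by construction. For $r\geq 1$, vanishing of $a_\tau^{(r,\varphi)}$ amounts to $p_{n+1-r}(\sigma,\varphi)=p_r(\sigma,\varphi)-1$ for some $\sigma\in J_K$. I would then use strict decrease to note that $p_{r+1}\leq p_r-1$, so the equality $p_{n+1-r}=p_r-1$ forces $p_{n+1-r}=p_{r+1}$; again by strict decrease this forces $n+1-r=r+1$, hence $n$ is even and $r=n/2$, leaving the residual condition $p_{n/2}(\sigma,\varphi)=p_{n/2+1}(\sigma,\varphi)+1$, which is exactly the failure mode in the statement.

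For part (ii), the condition $n_\tau^{(r,\varphi)}>n_{\bar\tau}^{(r,\varphi)}$ for $\tau\in\Phi$ is equivalent to $a_\tau^{(r,\varphi)}>0$ for every $\tau\in\Phi$. The case $r=0$ is excluded by $a^{(0)}<0$. For $r\geq 1$, positivity of $p_{n+1-r}(\sigma,\varphi)-p_r(\sigma,\varphi)+1$ (for all $\sigma\in J_K$) is equivalent to $p_{n+1-r}(\sigma,\varphi)\geq p_r(\sigma,\varphi)$ for all $\sigma$, which by the strict monotonicity of the Hodge sequence is equivalent to the index inequality $n+1-r\leq r$, i.e.\ $r\geq\lceil(n+1)/2\rceil=\lfloor n/2\rfloor+1$.

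I do not expect any real obstacle. The only delicate point is the case analysis in (i) around the parity of $n$ and the boundary equality, but strict decrease of the Hodge numbers, combined with the polarization symmetry, pins it down cleanly.
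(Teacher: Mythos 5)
Your argument is correct and follows essentially the same route as the paper: reduce criticality and the sign of $n_\tau^{(r,\varphi)}-n_{\bar\tau}^{(r,\varphi)}$ to elementary index inequalities via the identity $a_\tau^{(r,\varphi)}=p_{n+1-r}(\sigma,\varphi)-p_r(\sigma,\varphi)+1$ and strict monotonicity of the Hodge numbers. One small inaccuracy in attribution: the relation $p_i(\sigma,\varphi)+p_{n+1-i}(\sigma,\varphi)=w(M)$ (equivalently $q_i=p_{n+1-i}$) is not a consequence of the polarization but of purity and Hodge symmetry together with regularity, which is how the paper records it; the polarization plays no role in this lemma.
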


\begin{lemma}\label{lemma:resr} Let $r\in\{0,\dots,n\}$. If $n$ is even and $r=n/2$, assume that $p_{\frac{n}{2}}(\sigma,\varphi)\neq p_{\frac{n}{2}+1}(\sigma,\varphi)+1$. Then the realization $M(\chi^{(r,\varphi)})$ has critical values, and for every $\sigma\in J_{K}$, 
\[ r_{\sigma,\varphi,1}(\chi^{(r,\varphi)})=\left\{\begin{array}{lll} n-r & \text{if} & 0\leq r\leq\lfloor\frac{n}{2}\rfloor \\ r & \text{if} & \lfloor\frac{n}{2}\rfloor<r\leq n   ,\end{array}\right.\]
where $1\in J_{\Q(\chi^{(r,\varphi)})}$ denotes the given embedding of $\Q(\chi^{(r,\varphi)})$.
\begin{proof} First we need to check the condition that guarantees that $M(\chi^{(r,\varphi)})$ has critical values. Thus, we need to see that for any $\sigma\in J_{K}$, $\psi\in J_{E}$, $\rho\in J_{\Q(\chi^{(r,\varphi)})}$ and $i=1,\dots,n$, $t_{\sigma,\rho}(\chi^{(r,\varphi)})\neq w(M)-2p_{i}(\sigma,\psi)$. But 
\[ t_{\sigma,\rho}(\chi^{(r,\varphi)})=\pm(n^{(r,\varphi)}(\tau,\rho)-n^{(r,\varphi)}(\bar\tau,\rho)),\]
where $\tau\in\Phi$ extends $\sigma$. Suppose first that $r\neq 0$. Then 
\[ n^{(r,\varphi)}(\tau,\rho)-n^{(r,\varphi)}(\bar\tau,\rho)=n^{(r,\varphi)}_{\tilde\rho^{-1}\tau}-n^{(r,\varphi)}_{\tilde\rho^{-1}\bar\tau}=w(M)-2p_{r}(\sigma',\varphi)+1,\]
where $\sigma'\in J_{K}$ is the restriction of $\tilde\rho^{-1}\sigma$ to $K$. Thus, using that $p_{r}(\sigma',\varphi)+p_{n+1-r}(\sigma',\varphi)=w(M)$, 
\[ t_{\sigma,\rho}(\chi^{(r,\varphi)})=\pm(w(M)-2p_{r}(\sigma',\varphi)+1)=w(M)-2p_{r'}(\sigma',\varphi)\pm1,\]
where $r'$ is either $r$ or $n+1-r$, according to whether the sign is $+$ or $-$. This differs from $w(M)$ by an odd integer, so it can never be equal to $w(M)-2p_{i}(\sigma,\psi)$. 

If $r=0$, then
\[ t_{\sigma,\rho}(\chi^{(r,\varphi)})=-a^{(0)},\]
which by our choices is never equal to any of the $w(M)-2p_{i}(\sigma,\varphi)$. This shows that $M(\chi^{(r,\varphi)})$ has critical values. 

Now, if $\tau\in\Phi$ extends $\sigma$ then, by part (ii) of Lemma \ref{lemma:propschi},
\[ t_{\sigma,1}(\chi)=\left\{\begin{array}{lll}n^{(r,\varphi)}_{\tau}-n^{(r,\varphi)}_{\bar\tau} & \text{if} & \lfloor\frac{n}{2}\rfloor<r\leq n\\
							n^{(r,\varphi)}_{\bar\tau}-n^{(r,\varphi)}_{\tau} & \text{if} & 0\leq r\leq\lfloor\frac{n}{2}\rfloor.\end{array}\right.\]
If $\lfloor\frac{n}{2}\rfloor<r\leq n$, then $t_{\sigma,1}(\chi^{(r,\varphi)})=w(M)-2p_{r}(\sigma,\varphi)+1$. Since $p_{r+1}(\sigma,\varphi)+\frac{1}{2}<p_{r}(\sigma,\varphi)$, it follows that
\[ w(M)-2p_{r}(\sigma,\varphi)<t_{\sigma,1}(\chi^{(r,\varphi)})<w(M)-2p_{r+1}(\sigma,\varphi),\]
so that $r_{\sigma,\varphi,1}(\chi^{(r,\varphi)})=r$. If $0\leq r\leq\lfloor\frac{n}{2}\rfloor$, then $t_{\sigma,1}(\chi^{(r,\varphi)})=-w(M)+2p_{r}(\sigma,\varphi)-1=w(M)-2p_{n+1-r}(\sigma,\varphi)-1$, and by a similar reasoning we get that
\[ w(M)-2p_{n-r}(\sigma,\varphi)<t_{\sigma,1}(\chi^{(r,\varphi)})<w(M)-2p_{n-r+1}(\sigma,\varphi),\]
so that $r_{\sigma,\varphi,1}(\chi^{(r,\varphi)})=n-r$. 
\end{proof}
\end{lemma}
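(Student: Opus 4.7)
The plan is to apply the criterion recalled in Subsection \ref{ssec:polregmot} (Proposition 2.5.1 of \cite{guerbperiods}): $M(\chi^{(r,\varphi)})$ has critical values precisely when $\chi^{(r,\varphi)}$ is critical and $t_{\sigma,\rho}(\chi^{(r,\varphi)})\neq w(M)-2p_{i}(\sigma,\psi)$ for every $i$, $\sigma$, $\rho$ and $\psi$. Criticality of $\chi^{(r,\varphi)}$ is exactly the content of Lemma \ref{lemma:propschi}(i), where the assumption $p_{n/2}(\sigma,\varphi)\neq p_{n/2+1}(\sigma,\varphi)+1$ made in the statement covers the only potentially non-critical case $n$ even, $r=n/2$. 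The remaining work is then to compute $t_{\sigma,\rho}(\chi^{(r,\varphi)})$ from the definition of the infinity type of $\chi^{(r,\varphi)}$ and to locate it inside the gaps between the numbers $w(M)-2p_{i}$.

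Unfolding, $t_{\sigma,\rho}(\chi^{(r,\varphi)})=\pm(n^{(r,\varphi)}_{\tilde\rho^{-1}\tau}-n^{(r,\varphi)}_{\tilde\rho^{-1}\bar\tau})$ for $\tau\in\Phi$ extending $\sigma$. When $r>0$, the defining relation (\ref{eqn:defnatau}) turns this into $\pm(w(M)-2p_{r}(\sigma',\varphi)+1)$ for a suitable $\sigma'\in J_{K}$; using $p_{r}(\sigma',\varphi)+p_{n+1-r}(\sigma',\varphi)=w(M)$, this can be rewritten as $w(M)-2p_{r'}(\sigma',\varphi)\pm 1$ with $r'\in\{r,n+1-r\}$. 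Since this has opposite parity to $w(M)$, it can never coincide with any $w(M)-2p_{i}(\sigma,\psi)$. When $r=0$, one has $t_{\sigma,\rho}(\chi^{(0,\varphi)})=-a^{(0)}$, and the choice of $a^{(0)}$ strictly below $\min_{\sigma,\psi}\{w(M)-2p_{1}(\sigma,\psi)\}$ makes $-a^{(0)}$ strictly larger than every $w(M)-2p_{i}(\sigma,\psi)$, so non-coincidence again holds. Combined with criticality of $\chi^{(r,\varphi)}$, this proves that $M(\chi^{(r,\varphi)})$ has critical values.

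For the computation of $r_{\sigma,\varphi,1}(\chi^{(r,\varphi)})$, I would specialise the above to $\rho=1$ and invoke Lemma \ref{lemma:propschi}(ii) to fix the sign of $n^{(r,\varphi)}_{\tau}-n^{(r,\varphi)}_{\bar\tau}$ for $\tau\in\Phi$. If $\lfloor n/2\rfloor<r\leq n$ the sign is positive, so $t_{\sigma,1}(\chi^{(r,\varphi)})=w(M)-2p_{r}(\sigma,\varphi)+1$, which by regularity of $M$ lies strictly between $w(M)-2p_{r}(\sigma,\varphi)$ and $w(M)-2p_{r+1}(\sigma,\varphi)$ (the $p_{i}$'s are integers with $p_{r}>p_{r+1}$, so consecutive values differ by at least one); hence $r_{\sigma,\varphi,1}(\chi^{(r,\varphi)})=r$. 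If $0\leq r\leq\lfloor n/2\rfloor$ the sign flips and, after rewriting with $w(M)=p_{r}+p_{n+1-r}$, one obtains $t_{\sigma,1}(\chi^{(r,\varphi)})=w(M)-2p_{n+1-r}(\sigma,\varphi)-1$, which by the same bracketing argument lies strictly between $w(M)-2p_{n-r}(\sigma,\varphi)$ and $w(M)-2p_{n-r+1}(\sigma,\varphi)$, giving $r_{\sigma,\varphi,1}(\chi^{(r,\varphi)})=n-r$. The principal source of friction is the sign flip between the two regimes together with the exceptional case $n$ even, $r=n/2$, where the extra hypothesis of the lemma is genuinely needed; otherwise everything is a direct consequence of the way $\chi^{(r,\varphi)}$ was constructed.
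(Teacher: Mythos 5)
Your proposal is correct and follows essentially the same route as the paper's proof: reducing to the criterion $t_{\sigma,\rho}\neq w(M)-2p_i$, using the parity argument for $r>0$, the magnitude/choice of $a^{(0)}$ for $r=0$, and then the sign dichotomy from Lemma \ref{lemma:propschi}(ii) plus integrality of the $p_i$'s to bracket $t_{\sigma,1}$. Your explicit appeal to Lemma \ref{lemma:propschi}(i) to ensure criticality of $\chi^{(r,\varphi)}$ (a prerequisite the paper leaves implicit) is a small but welcome tightening.
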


\subsection{Formulas for quadratic periods} \label{subsection:quadratic}
In this subsection, we combine the results above to obtain a formula for the quadratic periods $Q_{j,\sigma}$ in terms of quotients of Deligne $\sigma$-periods of various twists $M(\chi)$, with an eye towards future applications. We first introduce some notation. For any $r=0,\dots,n$, and $\varphi\in J_{E}$, we fix $\chi^{(r,\varphi)}$ as above. Given our choices in (\ref{eqn:defnatau}) and (\ref{eqn:paritya0}), we can assume by Proposition \ref{prop:existechi} that all the characters $\chi^{(r,\varphi)}$ have the same weight $w_{0}\equiv w(M)+1(2)$. We let
\begin{equation}\label{eqn:defPchi} P_{\sigma}(\chi^{(r,\varphi)})=(2\pi i)^{-\lceil\frac{n}{2}\rceil w_{0}}G_{\sigma}(\chi^{(r,\varphi)})^{r}a_{\sigma}^{*}(\chi^{(r,\varphi)})Q_{\sigma}(\chi^{(r,\varphi)})^{r-\lceil\frac{n}{2}\rceil}.\end{equation}
Implicit here is the integer $n$ and its decomposition as $n=n^{+}+n^{-}$, depending on $M$. This is an element of $(\Q(\chi)\otimes\C)^{\times}$, well defined up to multiples in $(\Q(\chi)\otimes\sigma(K))^{\times}$. Note that if $1\leq j<\lceil\frac{n}{2}\rceil$, then $\lfloor\frac{n}{2}\rfloor<n-j\leq n-1$.

\begin{prop}\label{prop:Qjota} Let $\varphi\in J_{E}$ and $j\in\Z$ be an integer with $1\leq j<\lceil\frac{n}{2}\rceil$. Then, for any $\sigma\in J_{K}$, we have that
\[  Q_{j,\sigma,\varphi}\sim_{\varphi(E)\Q(\chi)\sigma(K)}
\begin{cases}
 \dfrac{c_{\sigma}^{+}(M(\chi^{(n-1,\varphi)}))_{\varphi,1}}{P_{\sigma}(\chi^{(n-1,\varphi)})_{1}}\delta_{\sigma}(M)_{\varphi}^{-1} & \mbox{if }  j=1\\
\dfrac{c_{\sigma}^{+}(M(\chi^{(n-j,\varphi}))_{\varphi,1}}{c_{\sigma}^{+}(M(\chi^{(n-j+1,\varphi})_{\varphi,1}}\dfrac{P_{\sigma}(\chi^{(n-j+1,\varphi)})_{1}}{P_{\sigma}(\chi^{(n-j,\varphi)})_{1}} & \mbox{if } j\neq 1.\end{cases}\]
\begin{proof}
Let $r=n-j\in\{\lfloor\frac{n}{2}\rfloor+1,\dots,n-1\}$. We apply the formula (\ref{eqn:formppal2}) to $\chi^{(r,\varphi)}$. By Lemma \ref{lemma:resr}, the formula in this case, looking at the coordinate given by the embeddings $\varphi\in J_{E}$ and $1\in J_{\Q(\chi)}$, says that
\begin{equation}\label{eqn:formQj1} c_{\sigma}^{+}(M(\chi^{(r,\varphi)}))_{\varphi,1}\sim\delta_{\sigma}(M)_{\varphi}P_{\sigma}(\chi^{(r,\varphi)})_{1}\prod_{i=1}^{j}Q_{i,\sigma,\varphi}.\end{equation}
Thus the formula for $j=1$ in the proposition is clear. If $j\geq 2$, then we also have
\begin{equation}\label{eqn:formQj2} c_{\sigma}^{+}(M(\chi^{(r+1,\varphi)}))_{\varphi,1}\sim\delta_{\sigma}(M)_{\varphi}P_{\sigma}(\chi^{(r+1,\varphi)})_{1}\prod_{i=1}^{j-1}Q_{i,\sigma,\varphi}.\end{equation}
Hence, the result follows by dividing (\ref{eqn:formQj1}) by (\ref{eqn:formQj2}) (note that all the relevant elements belong to $\C^{\times}$).
\end{proof}
\end{prop}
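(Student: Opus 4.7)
The plan is to apply the master formula (\ref{eqn:formppal2}) to the family of twists $M(\chi^{(r,\varphi)})$ for $r=n-j$ and, when $j\geq 2$, also for $r=n-j+1$, and then to isolate $Q_{j,\sigma,\varphi}$ by taking a suitable ratio. All the bookkeeping of the factors that depend on $\chi^{(r,\varphi)}$ but not on the quadratic periods of $M$ has already been packaged into the quantity $P_{\sigma}(\chi^{(r,\varphi)})$ from (\ref{eqn:defPchi}); the fact that the $\chi^{(r,\varphi)}$ can all be taken of the same weight $w_{0}$ (by Proposition \ref{prop:existechi}) is what makes $P_{\sigma}(\chi^{(r,\varphi)})$ uniform in $r$ and keeps the $(2\pi i)^{-\lceil n/2\rceil w(\chi)}$ factor consistent across the different twists.

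First I would verify that the relevant values of $r$ fall under the ``upper half'' branch of Lemma \ref{lemma:resr}. The hypothesis $1\leq j<\lceil n/2\rceil$ translates, via $r=n-j$, into $\lfloor n/2\rfloor+1\leq r\leq n-1$, and for the shifted value $r+1=n-j+1$ used in the ratio argument one has $\lfloor n/2\rfloor+2\leq r+1\leq n$. In particular $r$ and $r+1$ never hit the degenerate value $n/2$ where Lemma \ref{lemma:resr} would impose the extra Hodge-number condition, so $M(\chi^{(r,\varphi)})$ and $M(\chi^{(r+1,\varphi)})$ automatically admit critical values, and Lemma \ref{lemma:resr} gives $r_{\sigma,\varphi,1}(\chi^{(r,\varphi)})=r$ together with $s_{\sigma,\varphi,1}(\chi^{(r,\varphi)})=n-r$.

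Plugging these data into (\ref{eqn:formppal2}) and folding the $\chi$-dependent prefactors into $P_{\sigma}$ via (\ref{eqn:defPchi}), the formula collapses, modulo $\varphi(E)\Q(\chi^{(r,\varphi)})\sigma(K)$, to
\[ c_{\sigma}^{+}(M(\chi^{(r,\varphi)}))_{\varphi,1}\sim\delta_{\sigma}(M)_{\varphi}\,P_{\sigma}(\chi^{(r,\varphi)})_{1}\prod_{i=1}^{n-r}Q_{i,\sigma,\varphi}. \]
Specializing to $r=n-1$ makes the product consist of the single factor $Q_{1,\sigma,\varphi}$, which upon solving yields the $j=1$ case of the proposition. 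For $j\geq 2$, one writes this identity for $r=n-j$ and for $r+1=n-j+1$ and divides the first by the second: the two products telescope, the $\delta_{\sigma}(M)_{\varphi}$ factors cancel, and only $Q_{j,\sigma,\varphi}$ survives on the right, giving the stated formula. There is no serious obstacle; the one thing to watch is that all the quantities we divide genuinely lie in $\C^{\times}$, which is ensured by the nonvanishing built into the definition of $P_{\sigma}(\chi^{(r,\varphi)})$ and by the existence of critical values for the $M(\chi^{(r,\varphi)})$ just verified, so that the equivalence $\sim$ passes to quotients as required.
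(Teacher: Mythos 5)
Your proposal is correct and follows essentially the same route as the paper: apply formula (\ref{eqn:formppal2}) together with Lemma \ref{lemma:resr} to $\chi^{(n-j,\varphi)}$ (and, for $j\geq 2$, to $\chi^{(n-j+1,\varphi)}$), then isolate $Q_{j,\sigma,\varphi}$ by taking the telescoping ratio. The only extra content you add is an explicit check that the values $r=n-j$ and $r+1$ stay strictly above $n/2$ and hence avoid the degenerate case of Lemma \ref{lemma:resr}; the paper handles this implicitly by restricting $r$ to $\{\lfloor n/2\rfloor+1,\dots,n-1\}$.
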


We define
\[ Q_{j}=\prod_{\sigma\in J_{K}}Q_{j,\sigma}\in(E\otimes\C)^{\times}/(E\otimes K^{\Gal})^{\times} \]
and
\begin{equation}\label{eqn:defPchitotal} P(\chi^{(r,\varphi)})=\prod_{\sigma\in J_{K}}P_{\sigma}(\chi^{(r,\varphi)})\in(\Q(\chi)\otimes\C)^{\times}/(\Q(\chi)\otimes K^{\Gal})^{\times}.\end{equation}

Using formulas (\ref{form:yoshida1}) and (\ref{form:yoshida2}), we obtain the following corollary of Proposition \ref{prop:Qjota}.

\begin{coro}\label{coro:Qjota} Let the notation and assumptions be as above. Then, for each $\varphi\in J_{E}$, we have that
	\[  Q_{j,\varphi}\sim_{\varphi(E)\Q(\chi)K^{\Gal}}
	\begin{cases}
	\dfrac{c^{+}(M(\chi^{(n-1,\varphi)}))_{\varphi,1}}{P(\chi^{(n-1,\varphi)})_{1}}\delta(M)_{\varphi}^{-1} & \mbox{if }  j=1\\
	\dfrac{c^{+}(M(\chi^{(n-j,\varphi}))_{\varphi,1}}{c^{+}(M(\chi^{(n-j+1,\varphi})_{\varphi,1}}\dfrac{P(\chi^{(n-j+1,\varphi)})_{1}}{P(\chi^{(n-j,\varphi)})_{1}} & \mbox{if } j\neq 1.\end{cases}\]
\end{coro}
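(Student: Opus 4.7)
The plan is to obtain the global statement by simply multiplying the local statement of Proposition \ref{prop:Qjota} over all $\sigma \in J_K$ and then applying the factorization formulas (\ref{form:yoshida1}) and (\ref{form:yoshida2}) to reassemble the products of $\sigma$-periods into the corresponding global periods. The only technical point to track is that the discriminant factors $D_K^{n^{\pm}/2}$ and $D_K^{n/2}$ that appear when converting $\prod_{\sigma}c_{\sigma}^{+}(\cdot)$ and $\prod_{\sigma}\delta_{\sigma}(\cdot)$ into $c^{+}(\cdot)$ and $\delta(\cdot)$ lie in $K^{\Gal}$, which is included in the field over which the equivalence $\sim_{\varphi(E)\Q(\chi)K^{\Gal}}$ is taken.

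More concretely, first I would take the product over $\sigma \in J_K$ of the $j=1$ formula in Proposition \ref{prop:Qjota}. By definition $Q_{j,\varphi} = \prod_{\sigma}Q_{j,\sigma,\varphi}$ and $P(\chi^{(n-1,\varphi)})_1 = \prod_{\sigma}P_{\sigma}(\chi^{(n-1,\varphi)})_1$. Applying (\ref{form:yoshida1}) to the rank-$2n$ regular special realization $M(\chi^{(n-1,\varphi)})$, one obtains
\[ \prod_{\sigma}c_{\sigma}^{+}(M(\chi^{(n-1,\varphi)}))_{\varphi,1} \sim_{\varphi(E)\Q(\chi)K^{\Gal}} D_K^{-N/2}\,c^{+}(M(\chi^{(n-1,\varphi)}))_{\varphi,1}, \]
for the appropriate integer $N = \dim M(\chi^{(n-1,\varphi)})^{+}$, and applying (\ref{form:yoshida2}) to $M$ gives
\[ \prod_{\sigma}\delta_{\sigma}(M)_{\varphi} \sim_{\varphi(E)K^{\Gal}} D_K^{-n/2}\,\delta(M)_{\varphi}. \]
Since $D_K^{1/2}\in K^{\Gal}$, these discriminant factors are absorbed into the equivalence, and the stated formula for $j=1$ follows.

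For $j\neq 1$, the argument is identical: take the product over $\sigma$ of the corresponding formula in Proposition \ref{prop:Qjota}, and apply (\ref{form:yoshida1}) separately to the two realizations $M(\chi^{(n-j,\varphi)})$ and $M(\chi^{(n-j+1,\varphi)})$. Both are rank-$2n$ regular special realizations, so each contributes a factor of $D_K$ raised to some half-integer power; these factors lie in $K^{\Gal}$ and are again absorbed. The definitions of $Q_{j}$ and $P(\chi^{(r,\varphi)})$ as products over $\sigma$ then give exactly the claimed quotient expression. There is no real obstacle here, beyond being careful that the equivalence relation $\sim_{\varphi(E)\Q(\chi)\sigma(K)}$ used in Proposition \ref{prop:Qjota} can be upgraded to $\sim_{\varphi(E)\Q(\chi)K^{\Gal}}$ after taking the product, which is immediate since $\sigma(K)\subset K^{\Gal}$ for every $\sigma\in J_K$.
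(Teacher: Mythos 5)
Your proof is correct and is essentially the paper's own argument: the paper simply states that the corollary follows from Proposition \ref{prop:Qjota} "using formulas (\ref{form:yoshida1}) and (\ref{form:yoshida2})," which is exactly the multiply-over-$\sigma$-and-apply-Yoshida-factorization approach you carry out, including the observation that the resulting powers of $D_K^{1/2}$ are absorbed into $K^{\Gal}$.
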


Fix $\varphi\in J_{E}$ and $r\in\{0,\dots,n\}$. Let $s=n-r$. For the rest of this subsection, we let $\chi=\chi^{(r,\varphi)}$. Recall that $w_{0}$ is the weight of $\chi$. The period $P(\chi)$ of (\ref{eqn:defPchitotal}) can be interpreted in terms of CM periods, as in Subsection 2.6 of \cite{guerbperiods}. We are only interested in the case $r>\lfloor\frac{n}{2}\rfloor$, and we assume that this is the case from now. In particular, $n_{\tau}>n_{\bar\tau}$ for $\tau\in\Phi$. Moreover, we will only write down the formulas after fixing the embedding $1\in J_{\Q(\chi)}$, since $P(\chi)_{1}$ is what appears in Corollary \ref{coro:Qjota}. In \cite{harriskudla} (see also \cite{harrisunitary}), a family of CM periods attached to $\chi$ is defined. As a particular case, there is a period
\[ p(\chi;\Phi)\in\C^{\times},\]
well defined up to multiples in $\Q(\chi)^{\times}$. For each embedding $\rho\in J_{\Q(\chi)}$, let $\tilde\rho$ be an extension of $\rho$ to $\C$. We can define an algebraic Hecke character $\chi^{\rho}$, of infinity type $(n_{\tilde\rho^{-1}\tau})_{\tau\in J_{L}}$, obtained by applying $\rho$ to the values of $\chi$ on $\A_{L,f}^{\times}$. We can also define a CM type $\tilde\rho\Phi=\{\tilde\rho\tau:\tau\in\Phi\}$. All of these are independent of the extension $\tilde\rho$, and thus we get a well defined CM period $p(\chi^{\rho};\rho\Phi)\in\C^{\times}$. We let
\[ \mathbf{p}(\chi;\Phi)=(p(\chi^{\rho};\rho\Phi))_{\rho\in J_{\Q(\chi)}}\in(\Q(\chi)\otimes\C)^{\times}.\]
The following formula is a theorem of Blasius, and we use the formulation that appears as Proposition 1.8.1 of \cite{harrisunitary}, corrected as in the Introduction to \cite{harriscrelle}. Combined with Deligne's conjecture for the motive $[\chi]$, proved by Blasius (\cite{blasiusannals}), we get that if $m$ is a critical integer for $[\chi]$, then 
\begin{equation}\label{form:blasius} c^{+}([\chi](m))\sim_{\Q(\chi)}D_{K}^{1/2}(2\pi i)^{[K:\Q]m}\mathbf{p}(\check\chi;\Phi),\end{equation}
where $\check\chi=\chi^{\iota,-1}$. Here $\iota\in\Gal(L/K)$ is the non-trivial element. 

The following lemma allows us to relate the quadratic periods $Q_{j,\varphi}$ to quotients of Deligne periods and CM periods, via Corollay \ref{coro:Qjota}. Let
\[ G(\chi)=\prod_{\sigma\in J_{K}}G_{\sigma}(\chi)\in\left(\Q(\chi)\otimes\C\right)^{\times}.\]

\begin{lemma}\label{lemma:interpPchiCM} Let the notation and assumptions be as above. Let $t=0$ if there exist even critical integers $m$ for $[\chi]$, and let $t=1$ otherwise. Then
\[ P(\chi)\sim_{\Q(\chi)\otimes K^{\Gal}}(2\pi i)^{-[K:\Q]w_{0}s}G(\chi)^{s}\mathbf{p}(\check\chi;\Phi)^{r-s}\left(\prod_{\tau\in\Phi}e_{\tau}^{r-n^{+}+t}\right).\]
In particular,
\[ P(\chi)_{1}\sim_{\Q(\chi)K^{\Gal}}(2\pi i)^{-[K:\Q]w_{0}s}G(\chi)_{1}^{s}p(\check\chi;\Phi)^{r-s}.\]
\begin{proof} Throughout, we write $\sim$ for $\sim_{\Q(\chi)\otimes K^{\Gal}}$. By definition, we have that
	\begin{equation}\label{form:recallPchi} P(\chi)\sim(2\pi i)^{-\lceil\frac{n}{2}\rceil w_{0}[K:\Q]}G(\chi)^{r}\left(\prod_{\sigma\in J_{K}}a_{\sigma}^{*}(\chi)\right)\left(\prod_{\sigma\in J_{K}}Q_{\sigma}(\chi)^{r-\lceil\frac{n}{2}\rceil}\right),\end{equation}
	where $a_{\sigma}^{*}(\chi)=1$ if $n$ is even and $a_{\sigma}^{*}(\chi)=a_{\sigma}^{\pm}(\chi)$ if $n$ is odd, where $\pm=-$ if $n^{+}>n^{-}$ and $\pm=+$ if $n^{+}<n^{-}$.	
	
	By Lemma 2.4.1 of \cite{guerbperiods}, we have that
	\begin{equation}\label{eqn:igualde} c_{\sigma}^{-}(\chi)\sim_{\Q(\chi)\otimes K,\sigma}e_{\tau}c_{\sigma}^{+}(\chi),\end{equation}
	for $\tau$ any extension of $\sigma$. It follows from (\ref{form:yoshida1}) that 
	\begin{equation}\label{form:prodasigma} \prod_{\sigma\in J_{K}}a_{\sigma}^{\pm}(\chi)\sim(2\pi i)^{[K:\Q]w_{0}}D_{K}^{-1/2}G(\chi)^{-1}c^{+}(\chi)\left(\prod_{\tau\in\Phi}e_{\tau}^{?}\right), \end{equation}
	where $?=1$ if $\pm=+$, and $?=0$ if $\pm=-$. The factor $D_{K}^{-1/2}$ belongs to $K^{\Gal}$, so it can be ignored in the formula. Similarly,
	\begin{equation}\label{form:prodQsigma} \prod_{\sigma\in J_{K}}Q_{\sigma}(\chi)\sim(2\pi i)^{[K:\Q]w_{0}}G(\chi)^{-2}c^{+}(\chi)^{2}\left(\prod_{\tau\in\Phi}e_{\tau}\right).\end{equation}
Since $c^{+}([\chi](m))\sim(2\pi i)^{[K:\Q]m}c^{(-1)^{m}}(\chi)$, we get from (\ref{form:blasius}) and (\ref{eqn:igualde}) that
\begin{equation}\label{eqn:formfinalc+} c^{+}(\chi)\sim_{\Q(\chi)}D_{K}^{1/2}\mathbf{p}(\check\chi;\Phi)\prod_{\tau\in\Phi}e_{\tau}^{t}.\end{equation}
The lemma follows by combining (\ref{form:recallPchi}), (\ref{eqn:igualde}), (\ref{form:prodasigma}), (\ref{form:prodQsigma}) and (\ref{eqn:formfinalc+}).
\end{proof}
\end{lemma}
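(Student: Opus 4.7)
My plan is to expand the definition of $P(\chi)$ by direct substitution, using the preparatory identities (\ref{form:recallPchi}), (\ref{form:prodasigma}), (\ref{form:prodQsigma}), (\ref{eqn:igualde}) and (\ref{eqn:formfinalc+}), and to keep careful track of powers of $(2\pi i)$, $G(\chi)$, $c^+(\chi)$ and the sign factors $e_\tau$. First I would take the product over $\sigma \in J_K$ of the defining formula (\ref{eqn:defPchi}) for $P_\sigma(\chi)$, obtaining (\ref{form:recallPchi}), and then substitute in (\ref{form:prodasigma}) for $\prod_\sigma a^*_\sigma(\chi)$ and (\ref{form:prodQsigma}) for $\prod_\sigma Q_\sigma(\chi)^{r - \lceil n/2\rceil}$. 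Collecting like terms, the exponents of $2\pi i$, $G(\chi)$ and $c^+(\chi)$ should simplify uniformly to $-s w_0 [K:\Q]$, $s$ and $r - s$ respectively, regardless of the parity of $n$ or of the sign of $n^+ - n^-$, which only affects the shape of $a_\sigma^*$.

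The second step is to eliminate $c^+(\chi)$ via (\ref{eqn:formfinalc+}). Choosing a critical integer $m$ for $[\chi]$, Blasius's formula (\ref{form:blasius}) applied to the rank-one CM motive $[\chi]$ gives $c^{(-1)^m}(\chi) \sim_{\Q(\chi)} D_K^{1/2}\mathbf{p}(\check\chi; \Phi)$, and combining with (\ref{eqn:igualde}) one obtains $c^+(\chi) \sim D_K^{1/2}\mathbf{p}(\check\chi; \Phi) \prod_{\tau \in \Phi} e_\tau^{t}$. Raising to the $(r-s)$-th power contributes an extra $\prod_\tau e_\tau^{t(r-s)}$, which by $e_\tau^2 = 1$ depends only on the parity of $t(r-s)$ modulo $2$.

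The main obstacle will be the final bookkeeping of the $e_\tau$-exponent. It receives three contributions: $r - \lceil n/2 \rceil$ from the power of $\prod_\sigma Q_\sigma(\chi)$, a $0$ or a $1$ from $\prod_\sigma a^*_\sigma(\chi)$ according as $a^*_\sigma = a^-_\sigma$ or $a^*_\sigma = a^+_\sigma$, and $t(r-s)$ from the elimination of $c^+(\chi)$. Splitting into the three cases ($n$ even with $a^* = 1$; $n$ odd with $n^+ > n^-$ and $a^* = a^-$; $n$ odd with $n^- > n^+$ and $a^* = a^+$), using $n^+ = (n \pm 1)/2$ in the two odd cases, and that $r - s = 2r - n$ has opposite parity to $n$, a short parity check shows the total is congruent to $r - n^+ + t \pmod 2$. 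This, together with $e_\tau^2 = 1$, yields the first displayed formula of the lemma modulo $(\Q(\chi) \otimes K^\Gal)^\times$.

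For the ``in particular'' statement, I would pass to the $\rho = 1$ component of both sides. Each $e_{\tau, 1}$ is $\pm 1$, so $\prod_{\tau \in \Phi} e_{\tau, 1}^{r - n^+ + t}$ is a rational sign, and is absorbed by the equivalence $\sim_{\Q(\chi) K^\Gal}$, leaving the stated scalar formula in $\C^\times$.
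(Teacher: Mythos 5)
Your approach is essentially the paper's: the paper's proof is just a terse instruction to combine the five preparatory identities, and you are filling in the bookkeeping. The exponents of $2\pi i$, $G(\chi)$ and $c^+(\chi)$ come out correctly exactly as you describe, and your treatment of the final ``in particular'' statement --- observing that $e_{\tau,1}=1$ for $\tau\in\Phi$ (indeed, $e_{\tau,1}=1$ because $n_\tau>n_{\bar\tau}$ for $\tau\in\Phi$), so that the $e_\tau$-factor vanishes at the component $\rho=1$ --- is exactly what is needed.

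However, your parity argument for the $e_\tau$-exponent contains an error: you assert that $r-s=2r-n$ has \emph{opposite} parity to $n$, but since $2r$ is always even, $2r-n$ has the \emph{same} parity as $n$. In the odd cases this does not matter (for $n$ odd, $r-s$ is odd, so $t(r-s)\equiv t\pmod 2$, and combining with the contribution $0$ or $1$ from $a_\sigma^*$ and $r-\lceil n/2\rceil$ from $\prod Q_\sigma$, one does indeed land on $r-n^+ +t$ both when $n^+=(n+1)/2$ and when $n^+=(n-1)/2$). But in the $n$ even case, $r-s$ is even, so $t(r-s)\equiv 0\pmod 2$, and since $a_\sigma^*=1$ contributes nothing and $n^+=n/2$ by regularity, the honest bookkeeping gives exponent $r-n^+$ rather than $r-n^+ +t$. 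Your erroneous parity claim is what makes your answer agree with the stated exponent $r-n^+ +t$ for $n$ even; a correct check leaves a discrepancy of $\prod_\tau e_\tau^t$ in that case. This discrepancy is harmless for the ``in particular'' formula (and hence for the paper's applications), but you should redo the $n$-even parity check rather than rely on the incorrect claim about $2r-n$.
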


\begin{rem} The last lemma is true for any critical $\chi$ such that $n_{\tau}>n_{\bar\tau}$ for $\tau\in\Phi$, as long as the definition (\ref{eqn:defPchi}) of $P(\chi)$ uses the same $r$, $s$ and $n^{+}$.
\end{rem}

\subsection{Deligne periods when $r=n$} In this subsection, we obtain a formula for the Deligne period of $M(\chi)(k)$ whenever $\chi$ has $r=n$ and $k$ is a critical integer. We can use the above formulas for $\chi^{(n,\varphi)}$, but it will be more useful to allow more general characters. Thus, suppose that $\chi$ is an algebraic Hecke character, critical of infinity type $(n_{\tau})_{\tau\in J_{L}}$. Suppose that the CM type $\Phi$ and the infinity type of $\chi$ are related by the condition
\[ n_{\tau}>n_{\bar\tau}\quad(\tau\in\Phi).\]
Moreover, fix $\varphi\in J_{E}$, and suppose that
\begin{equation}\label{eqn:difgrande} n_{\tau}-n_{\bar\tau}>\max\{w(M)-2p_{n}(\sigma,\varphi)\}_{\sigma\in J_{K}} \end{equation}
for all $\tau\in\Phi$. 

\begin{rem}\label{rem:indepvarphi} If (\ref{eqn:difgrande}) holds for the embedding $\varphi\in J_{E}$, then it holds for any other $\psi\in J_{E}$ as well. This follows from the following more general fact. Let $N$ be a realization over $K$, with coefficients in $E$, pure of weight $w(N)$. For $\varphi\in J_{E}$, let 
	\[ T(\varphi)=\bigcup_{\sigma\in J_{K}}\{p\in\Z:N_{\sigma}^{pq}(\varphi)\neq0\}.\]
	Then $T(\varphi)=T(\psi)$ for $\varphi,\psi\in J_{E}$. Indeed, let $p\in T(\varphi)$. Then $\gr^{p}(N_{\dR})\otimes_{E\otimes K,\varphi\otimes\sigma}\C\neq 0$. There exists an element $h\in\Aut(\C)$ such that $\psi=h\varphi$, and then
	\[ 0\neq\gr^{p}(N_{\dR})\otimes_{E\otimes K,\varphi\otimes\sigma}\C\otimes_{\C,h}\C=\gr^{p}(N_{\dR})\otimes_{E\otimes K,\psi\otimes h\sigma},\]
	so $p\in T(\psi)$. 
	Then, (\ref{eqn:difgrande}) can be stated as
	\[ n_{\tau} -n_{\bar\tau}>\max\{w(M)-2p\}_{p\in T(\varphi)},\]
	which is independent of $\varphi$.
\end{rem}

We then have, just as in the proof of Lemma \ref{lemma:resr}, that $M(\chi)$ has critical values and
\begin{equation}\label{eqn:resn} r_{\sigma,\varphi,1}(\chi)=n \end{equation}
for all $\sigma\in J_{K}$ and $\varphi\in J_{E}$. As in (\ref{eqn:defPchi}) and (\ref{eqn:defPchitotal}), we let
\[ P_{\sigma}(\chi)=(2\pi i)^{-\lceil\frac{n}{2}\rceil w(\chi)}G_{\sigma}(\chi)^{n}a_{\sigma}^{*}(\chi)Q_{\sigma}(\chi)^{\lfloor\frac{n}{2}\rfloor} \]
and
\[ P(\chi)=\prod_{\sigma\in J_{K}}P_{\sigma}(\chi),\]
where again $n=n^{+}+n^{-}$ is implicit in the notation. 

Taking into account (\ref{eqn:resn}), formula (\ref{eqn:formppal}) applied to the case of $\chi$ says that
\[ c_{\sigma}^{+}(M(\chi))_{\varphi,1}\sim_{\varphi(E)\Q(\chi)\sigma(K)}\delta_{\sigma}(M)_{\varphi}P_{\sigma}(\chi)_{1}.\]
Using formulas (\ref{form:yoshida1}) and (\ref{form:yoshida2}), we get
\begin{equation}\label{form:c+Mchi} c^{+}(M(\chi))_{\varphi,1}\sim_{\varphi(E)\Q(\chi)K^{\Gal}}\delta(M)_{\varphi}P(\chi)_{1}.\end{equation}

The set of critical integers for $M(\chi)$ is computed in (2.5.2) of \cite{guerbperiods}. In this case, this set consist of those integers $k$ such that
\begin{equation}\label{eqn:criticalinteger} p_{1}(\sigma,\varphi)+n_{\bar\tau}<k\leq p_{n}(\sigma,\varphi)+n_{\tau}\end{equation}
for every $\tau\in\Phi$ (with $\sigma=\tau|_{K}$) and a fixed $\varphi\in J_{E}$. 

\begin{rem} By Remark \ref{rem:indepvarphi}, these inequalities are independent of the chosen $\varphi$. For this we need to assume, as we do, that $E(\chi)$ is a field. In the general case, $M(\chi)$ is a collection of realizations with coefficients in the fields appearing in $E(\chi)$, and a critical integer is defined to be one which is critical for each of these realizations.
\end{rem}
	
	Let $k$ be an integer satisfying (\ref{eqn:criticalinteger}). By Lemma 2.4.1 of \cite{guerbperiods}, 
\[ c_{\sigma}^{+}(M(\chi)(k))\sim_{E(\chi)\otimes K,\sigma}(2\pi i)^{kn}c_{\sigma}^{+}(M(\chi))e_{\tau}^{(-1)^{k}}\]
with $\tau\in\Phi$ extending $\sigma$. Using (\ref{form:yoshida1}), we get that
\[ c^{+}(M(\chi)(k))\sim_{E(\chi)\otimes K^{\Gal}}(2\pi i)^{[K:\Q]kn}c^{+}(M(\chi))\prod_{\tau\in\Phi}e_{\tau}^{(-1)^{k}}.\]
Combining this with (\ref{form:c+Mchi}), we obtain that
\[ c^{+}(M(\chi)(k))_{\varphi,1}\sim_{\varphi(E)\Q(\chi)K^{\Gal}}(2\pi i)^{[K:\Q]kn}\delta(M)_{\varphi}P(\chi)_{1}.\]
Finally, the following result follows from this and Lemma \ref{lemma:interpPchiCM} (see also (2.6.2) of \cite{guerbperiods}). For clarity, we recall all the relevant hypotheses.

\begin{prop}\label{prop:exprc+k} Let $M$ be a regular, special, polarized realization over $K$ with coefficients in $E$, pure of weight $w(M)$ and rank $n$. Let $\Phi$ be a CM type for $L/K$, and let $\chi$ be a critical algebraic Hecke character of $L$ of infinity type $(n_{\tau})_{\tau\in J_{L}}$. Suppose that $n_{\tau}>n_{\bar\tau}$ for $\tau\in\Phi$, and that (\ref{eqn:difgrande}) holds. Let $k$ be an integer satisfying (\ref{eqn:criticalinteger}). Then
\[ c^{+}(M(\chi)(k))_{\varphi,1}\sim_{\varphi(E)\Q(\chi)K^{\Gal}}(2\pi i)^{[K:\Q]kn}\delta(M)_{\varphi}p(\check\chi;\Phi)^{n} \]
for any $\varphi\in J_{E}$.
\end{prop}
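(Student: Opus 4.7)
The plan is to assemble the proposition directly from the chain of identities worked out in the paragraphs immediately above its statement, together with Lemma \ref{lemma:interpPchiCM} specialized to $r=n$ (hence $s=0$). The only genuine work is to verify that the critical-interval index $r_{\sigma,\varphi,1}(\chi)$ equals $n$ for every $\sigma$ and $\varphi$ under the present hypotheses, and to observe that the CM-period description of $P(\chi)_{1}$ collapses to a clean power of $p(\check\chi;\Phi)$.

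First I would establish that $r_{\sigma,\varphi,1}(\chi)=n$ for all $\sigma\in J_{K}$ and $\varphi\in J_{E}$. Fix $\sigma$ and let $\tau\in\Phi$ extend $\sigma$. Since $n_{\tau}>n_{\bar\tau}$ by assumption, we have $t_{\sigma,1}(\chi)=n_{\tau}-n_{\bar\tau}$. By (\ref{eqn:difgrande}) this strictly exceeds $w(M)-2p_{n}(\sigma,\varphi)$, and hence strictly exceeds every $w(M)-2p_{i}(\sigma,\varphi)$ (they decrease with $i$). Thus $M(\chi)$ has critical values, while the upper inequality defining $r_{\sigma,\varphi,1}(\chi)$ is automatic from $p_{n+1}(\sigma,\varphi)=-\infty$. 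Consequently $\mathbf{r}_{\sigma}=n$ and $\mathbf{s}_{\sigma}=0$, so the quadratic-period product $\prod_{j=1}^{\mathbf{s}_{\sigma}}Q_{j,\sigma}$ appearing in (\ref{eqn:formppal2}) is empty.

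Substituting $\mathbf{r}_{\sigma}=n$, $\mathbf{s}_{\sigma}=0$ into (\ref{eqn:formppal2}) and recognizing $P_{\sigma}(\chi)_{1}$ in the resulting expression gives
\[ c_{\sigma}^{+}(M(\chi))_{\varphi,1}\sim_{\varphi(E)\Q(\chi)\sigma(K)}\delta_{\sigma}(M)_{\varphi}P_{\sigma}(\chi)_{1}. \]
Taking the product over $\sigma\in J_{K}$ and applying (\ref{form:yoshida1}) and (\ref{form:yoshida2}) (the discriminant factor $D_{K}^{1/2}$ lies in $K^{\Gal}$ and so is absorbed) yields $c^{+}(M(\chi))_{\varphi,1}\sim\delta(M)_{\varphi}P(\chi)_{1}$. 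Lemma 2.4.1 of \cite{guerbperiods} then handles the Tate twist locally: $c_{\sigma}^{+}(M(\chi)(k))\sim(2\pi i)^{kn}c_{\sigma}^{+}(M(\chi))e_{\tau}^{(-1)^{k}}$; multiplying over $\sigma\in J_{K}$ the factor $\prod_{\tau\in\Phi}e_{\tau}^{(-1)^{k}}\in\Q(\chi)^{\times}$ is absorbed, giving
\[ c^{+}(M(\chi)(k))_{\varphi,1}\sim_{\varphi(E)\Q(\chi)K^{\Gal}}(2\pi i)^{[K:\Q]kn}\delta(M)_{\varphi}P(\chi)_{1}. \]

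Finally I would invoke Lemma \ref{lemma:interpPchiCM} with $r=n$, so that $s=0$. Its standing hypotheses ($\chi$ critical, $n_{\tau}>n_{\bar\tau}$ for $\tau\in\Phi$, and $r>\lfloor n/2\rfloor$) are in force, and with $s=0$ the factors $(2\pi i)^{-[K:\Q]w_{0}s}$ and $G(\chi)_{1}^{s}$ both trivialize, leaving
\[ P(\chi)_{1}\sim_{\Q(\chi)K^{\Gal}}p(\check\chi;\Phi)^{n}. \]
Substituting into the previous display produces the stated equivalence. I do not anticipate a real obstacle: the proposition is essentially a packaging of earlier results, and the only point requiring genuine care is the bookkeeping of the signs $e_{\tau}$, the discriminant factor $D_{K}^{1/2}$, and the extension of the equivalence relations from $\sim_{\varphi(E)\Q(\chi)\sigma(K)}$ to $\sim_{\varphi(E)\Q(\chi)K^{\Gal}}$ when passing from local $\sigma$-periods to global ones.
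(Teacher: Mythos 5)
Your proof is correct and follows essentially the same route as the paper: establish $r_{\sigma,\varphi,1}(\chi)=n$ (hence $\mathbf{s}_{\sigma}=0$), specialize (\ref{eqn:formppal2}) to recognize $\delta_{\sigma}(M)_{\varphi}P_{\sigma}(\chi)_{1}$, take products over $\sigma$ via (\ref{form:yoshida1})--(\ref{form:yoshida2}), handle the Tate twist via Lemma 2.4.1 of \cite{guerbperiods}, and finish by Lemma \ref{lemma:interpPchiCM} with $r=n$, $s=0$. One tiny slip worth flagging: the quantities $w(M)-2p_{i}(\sigma,\varphi)$ \emph{increase} with $i$ (since $p_{i}$ decreases), which is precisely why exceeding $w(M)-2p_{n}(\sigma,\varphi)$ exceeds all of them; your parenthetical ``they decrease with $i$'' states the opposite, though the conclusion you draw is unaffected.
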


\begin{rem}\label{rem:delta} The factor $\delta(M)_{\varphi}$ in the previous proposition needs to be dealt with. It can easily be replaced with a suitable power of $2\pi i$ if $w(M)$ is odd (and hence $n$ is even and the polarization is alternated). More precisely, it can be shown (see Lemma 1.4.12 of \cite{harriscrelle} or Remark 2.3.1 of \cite{guerbperiods}) that $\delta_{\sigma}(M)\sim_{E\otimes K,\sigma}(2\pi i)^{-w(M)n/2}$. Thus, (\ref{form:yoshida2}) implies that
	\[ \delta(M)\sim_{E\otimes K^{\Gal}}(2\pi i)^{-w(M)[K:\Q]n/2} \]
	in this case. When $w(M)$ is even, there is apparently no simple way to obtain such an expression. When comparing with automorphic motives in the following sections, we will deal with this case assuming an extra conjecture. 
\end{rem}

\section{Critical values of automorphic $L$-functions}\label{sec:automorphic}
In this section we recall the main results of \cite{guerbperiods} regarding the critical values of $L$-functions of cohomological automorphic forms on unitary groups. In this paper, we will only care about totally definite unitary groups. 

\subsection{Totally definite unitary groups} Let $L/K$ be a CM extension and $\Phi$ a CM type for $L/K$. Let $V$ be a finite-dimensional $L$-vector space, and $h:V\times V\to L$ be a non-degenerate hermitian form, relative to the non-trivial automorphism $\iota\in\Gal(L/K)$. Let $n=\dim_{L}V$. We let $G$ be the similitude unitary group, with similitude factors in $\Q$, attached to $(V,h)$. Thus, for a $\Q$-algebra $R$, the points of $G$ with values in $R$ are given by
\[ G(R)=\{g\in\Aut_{L\otimes R}(V\otimes R):h_{R}(gu,gv)=\nu(g)h_{R}(u,v)\quad\forall u,v\in V\otimes R\},\]
where $\nu(g)\in R^{\times}$. Here $h_{R}:V\otimes R\times V\otimes R\to L\otimes R$ is given by $h_{R}(u\otimes a,v\otimes b)=h(u,v)\otimes ab$. For each $\tau\in J_{L}$, let $V_{\tau}=V\otimes_{L,\tau}\C$. This is equipped with a hermitian form $h_{\tau}$ relative to complex conjugation on $\C/\R$. In particular, there is a well-defined signature $(r_{\tau},s_{\tau})$. We will assume throughout the paper that $V$ is totally definite. This means that for any $\tau\in\Phi$, the signature is $(r_{\tau},s_{\tau})=(n,0)$. We also fix an $L$-basis $\beta=\{v_{1},\dots,v_{n}\}$ of $V$, orthogonal for $h$. As in (3.1.1) of \cite{guerbperiods}, we can write
\[ G_{\R}\cong\left(\prod_{\tau\in\Phi}GU(n,0)\right)',\quad G_{\C}\cong\left(\prod_{\tau\in\Phi}\GL_{n,\C}\right)\times\GL_{1,\C},\]
where the symbol $'$ means that we are looking at tuples where all the elements have the same multiplier $\nu$. Here, the group $GU(n,0)$ is the usual similitude unitary group over $\R$ of the identity matrix $I_{n}$. There is a maximal torus $T\subset G$, namely the subgroup of automorphism which are diagonal with respect to the basis $\beta$, such that $T_{\C}$ corresponds to the subgroup of diagonal matrices under the second isomorphism above. We let $B\subset G_{\C}$ be the Borel subgroup corresponding to $\left(\prod_{\tau\in\Phi}B_{n,\C}\right)\times\GL_{1,\C}$, where $B_{n,\C}$ is the group of upper triangular matrices in $\GL_{n,\C}$. 

We use the notation of Subsection 3.3 of \cite{guerbperiods} regarding roots and weights. In particular, we identify the group $\Lambda=X^{*}(T)$ with the group of tuples
\[ \mu=\left((a_{\tau,1},\dots,a_{\tau,n})_{\tau\in\Phi};a_{0}\right)\in\left(\prod_{\tau\in\Phi}\Z^{n}\right)\times\Z.\]
The set of dominant weights $\Lambda^{+}$, with respect to the Borel subgroup $B$, are those $\mu$ for which $a_{\tau,1}\geq\dots \geq a_{\tau,n}$ for all $\tau\in\Phi$. In the notation of {\em op. cit.}, the group $K_{x}$ is the whole group $G_{\R}$, $\Lambda^{+}_{x,c}=\Lambda^{+}$ and $w_{0}^{1}=1$.

\subsection{Automorphic forms} A Shimura datum $(G,X)$ is constructed in Subsection 3.2 of \cite{guerbperiods}, assuming that $V$ is not totally definite. In our case, we can still define zero-dimensional varieties $S_{U}$ for a compact open subgroup $U\subset G(\A_{f})$. These are algebraic varieties over the reflex field $E$, which is the field generated over $\Q$ by the elements $\sum_{\tau\in\Phi}\tau(b)$, for $b\in L$. In particular, $E\subset L^{\Gal}$, the Galois closure of $L$ in $\bar\Q$. The set of complex points of $S_{U}$ is the finite set
\[ S_{U}(\C)=G(\Q)\backslash G(\A_{f})/U.\]
Most of what is contained in Section 3 of \cite{guerbperiods} also applies to these zero-dimensional varieties. We denote by $S$ the projective limit of the $S_{U}$.

From now on, fix $\mu\in\Lambda^{+}$ such that the corresponding representation $W=W_{\mu}$ of $G_{\C}$ is defined over $\Q$. This implies that 
\[ a_{\tau,i}=-a_{\tau,n+1-i} \]
for every $\tau\in\Phi$ and $i=1,\dots,n$. Let $\xi=2a_{0}$. We let $\Coh_{G,\mu}$ be the set of cuspidal automorphic representations $\pi$ of $G(\A)$ which are essentially tempered and cohomological of type $\mu$. The last condition means that 
\[ (\pi_{\infty}\otimes_{\C}W_{\mu})^{G(\R)}\neq 0.\]
Let $\pi\in\Coh_{G,\mu}$. The motivic normalization of the standard $L$-function is given by
\[ L^{\mot,S}(s,\pi,\St)=L^{S}\left(s-\frac{n-1}{2},\pi,\St\right),\]
where $\St$ stands for the $L$-function corresponding to the standard representation of the $L$-group of $G$, and $S$ is a big enough finite set of places of $L$, included to ensure that the local base change from $G$ (rather, the unitary group) to $\GL_{n}$ is defined at places outside $S$. We let $E(\pi)$ be a CM field containig $L^{\Gal}$ over which $\pi_{f}$ can be realized. Such a CM field always exist (see \cite{bhr}, Theorem 4.4.1, and \cite{harriscrelle}, 2.6).  We let $\pi_{f,0}$ be a model of $\pi_{f}$ over $E(\pi)$. We will assume from now on the following list of hypotheses.

\begin{hyps}\label{hyp:running} The representation $\pi\in\Coh_{G,\mu}$ satisfies:
	
	\begin{enumerate}
		\item $\pi^{\vee}\cong\pi\otimes\|\nu\|^{\xi}$,
		\item for any $\sigma\in J_{E(\pi)}$, $\pi_{f}^{\sigma}=\pi_{f,0}\otimes_{E(\pi),\sigma}\C$ is essentially tempered, and
		\item $\dim_{\C}\Hom_{\C[G(\A_{f})]}(\pi_{f}^{\sigma},H^{0}(S_{\C},\mathcal{E}_{\mu}))\leq 1$.
	\end{enumerate}
\end{hyps}
Hypothesis \ref{hyp:running}, (1), is assumed for simplifying purposes, and it will be satisfied in the applications of Section \ref{sec:deligne}. Hypotheses \ref{hyp:running}, (2) and (3) are expected to be satisfied in most cases in our applications, and we assume them without comment. In (3), $\mathcal{E}_{\mu}$ is the automorphic vector bundle over the Shimura variety $S$ defined by the representation $W_{\mu}$. 

Automorphic quadratic periods for $\pi$ are defined in Subsection 3.10 of \cite{guerbperiods}. Under our running hypotheses, we can define a holomorphic quadratic period
\[ Q^{\hol}(\pi)\in E(\pi)\otimes\C.\]
We don't need to recall the precise definition of it, but rather its interpretation as a Petersson norm. As in Remarks 3.9.2 and 3.10.1 of \cite{guerbperiods},
\[ Q^{\hol}(\pi)\sim_{E(\pi)\otimes L^{\Gal}}\int_{G(\Q)Z(\A)\backslash G(\A)}f(g)\bar f(g)\|\nu(g)\|^{\xi}dg,\]
where $f$ is an automorphic form on $G(\A)$, contributing to $\pi$ rationally in the sense of the canonical model of $\mathcal{E}_{\mu}$ over $L^{\Gal}$. 

\subsection{The main formula for critical values} Here we recall the main result (Theorem 4.5.1) of \cite{guerbperiods} in the case of totally definite unitary grops. As above, we let $\pi\in\Coh_{G,\mu}$, with $W_{\mu}$ defined over $\Q$, be an automorphic representation satisfying Hypotheses \ref{hyp:running}. Let $\psi$ be an algebraic Hecke character of $L$, of infinity type $(m_{\tau})_{\tau\in J_{L}}$, and let $m>n$ be an integer satisfying 
\begin{equation}\label{eqn:ineqm} m\leq a_{\tau,n}+m_{\tau}-m_{\bar\tau} \end{equation}
for every $\tau\in\Phi$ (this is inequality (4.2.1) of \cite{guerbperiods}). Note that $n$ needs to satisfy the same inequality for such an integer $m$ to exist, which poses the condition that $m_{\tau}-m_{\bar\tau}>n-a_{\tau,n}$ for every $\tau\in\Phi$. For the time being, let $m>n$ be any integer satisfying (\ref{eqn:ineqm}). The representation $\pi\otimes(\psi\circ\det)$ will be denoted by $\pi\otimes\psi$. We let
\[ \tilde\psi=\frac{\psi}{\psi^{\iota}}.\]
In the context of the present paper (totally definite unitary groups) Theorem 4.5.1 of \cite{guerbperiods} is basically the following result. 

\begin{prop}\label{prop:maincritical} Let the notation and assumptions be as above. Then
\[	L^{S,\mot}(m,\pi\otimes\psi,\St)\sim_{E(\pi)\Q(\psi)}(2\pi i)^{[K: \Q]\left(mn-\frac{n(n-1)}{2}\right)-\xi}p(\tilde\psi;\Phi)^{n}.   \]
	
	\begin{proof}  We apply the statement of the main theorem of \cite{guerbperiods} given as formula (4.5.2) of {\em op. cit.}, and ignoring $D_{K}^{n/2}$, since it already belongs to $K^{\Gal}\subset L^{\Gal}\subset E(\pi)$. We also fix the embeddings $1\in J_{E(\pi)}$ and $1\in J_{\Q(\psi)}$. The formula then is
\begin{equation}\label{eqn:formulacritica} L^{S,\mot}(m,\pi\otimes\psi,\St)\sim_{E(\pi)\Q(\psi)}(2\pi i)^{[K: \Q]\left(mn-\frac{n(n-1)}{2}\right)-\xi}Q^{\hol}(\pi)_{1}p(\psi;h)p(\psi^{-1},\bar h). \end{equation}
Here $\mathbb{S}=\Res_{\C/\R}\mathbb{G}_{m,\C}$, where $\mathbb{G}_{m}$ is the multiplicative group, and $h:\mathbb{S}\to(\Res_{L/\Q}\mathbb{G}_{m,L})_{\R}\cong\prod_{\tau\in\Phi}\mathbb{S}$ is given as $h=(h_{\tau})_{\tau\in\Phi}$, with $h_{\tau}:\mathbb{S}\to\mathbb{S}$ defined by $h_{\tau}(z)=z^{n}$ (see \cite{guerbperiods}, 3.10). The map $\bar h$ is given by $\bar h(z)=h(\bar z)$. The elements $p(\psi;h)$ and $p(\psi^{-1};\bar h)$ are CM periods satisfying
\[ p(\psi;h)\sim_{\Q(\psi)L^{\Gal}}\prod_{\tau\in\Phi}p(\psi^{n};\{\tau\}) \]
and
\[ p(\psi^{-1};\bar h)\sim_{\Q(\psi)L^{\Gal}}\prod_{\tau\in\Phi}p(\psi^{\iota,-n};\{\tau\}) \]
(see {\em op. cit.}, (3.10.3)). By Proposition 1.4 and Corollary 1.5 of \cite{harrisunitary}, we can write
\begin{equation}\label{eqn:formppsi} p(\psi;h)p(\psi^{-1};\bar h)\sim_{\Q(\psi)L^{\Gal}}p(\tilde\psi;\Phi)^{n}.\end{equation}

Finally, since the hermitian space $V$ is totally definite, the quadratic period $Q^{\hol}(\pi)$ can be taken to be in $E(\pi)$ (see for instance \cite{harrisquebec}, Section 5). Thus, it can taken to be $1$ in (\ref{eqn:formulacritica}), which, together with (\ref{eqn:formppsi}), proves the formula in the statement of the theorem. 
\end{proof}
\end{prop}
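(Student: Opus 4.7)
The plan is to deduce Proposition \ref{prop:maincritical} as a direct specialization to the totally definite case of the main critical-value formula proved in \cite{guerbperiods} (Theorem 4.5.1), combined with two simplifications: an identity relating the pair of CM periods that appear in the general formula to a single period $p(\tilde\psi;\Phi)^{n}$, and the fact that the holomorphic quadratic period of $\pi$ is algebraic when the hermitian space is totally definite. First I would record the general identity
\[ L^{S,\mot}(m,\pi\otimes\psi,\St)\sim_{E(\pi)\Q(\psi)}(2\pi i)^{[K:\Q](mn-n(n-1)/2)-\xi}Q^{\hol}(\pi)_{1}\,p(\psi;h)\,p(\psi^{-1};\bar h), \]
which is the specialization of formula $(4.5.2)$ of \cite{guerbperiods} to our situation, after observing that the discriminant factor $D_{K}^{n/2}$ already lies in $K^{\Gal}\subset L^{\Gal}\subset E(\pi)$ and can be absorbed into the equivalence $\sim_{E(\pi)\Q(\psi)}$. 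The cocharacter $h$ here is the one attached to the totally definite PEL datum, factoring as a product over $\Phi$ of the $n$-th power map on $\mathbb{S}$.

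Next I would handle the CM periods. Using the functoriality and multiplicativity of the Harris periods $p(\cdot;\cdot)$, together with the explicit formulas of \cite{harrisunitary} (Proposition~$1.4$ and Corollary~$1.5$), I can rewrite
\[ p(\psi;h)\sim_{\Q(\psi)L^{\Gal}}\prod_{\tau\in\Phi}p(\psi^{n};\{\tau\}),\quad p(\psi^{-1};\bar h)\sim_{\Q(\psi)L^{\Gal}}\prod_{\tau\in\Phi}p(\psi^{\iota,-n};\{\tau\}), \]
and combine the two using the cocycle relation for CM periods and the definition $\tilde\psi=\psi/\psi^{\iota}$ to obtain
\[ p(\psi;h)\,p(\psi^{-1};\bar h)\sim_{\Q(\psi)L^{\Gal}}p(\tilde\psi;\Phi)^{n}. \]
Since $L^{\Gal}\subset E(\pi)$, this equivalence holds a fortiori up to $E(\pi)\Q(\psi)$-factors.

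The remaining step is to eliminate the $Q^{\hol}(\pi)_{1}$ factor, and this is where the totally definite hypothesis is crucial. Because all the signatures at archimedean places are $(n,0)$, the Shimura datum $S_{U}$ is zero-dimensional, so $H^{0}(S_{\C},\mathcal{E}_{\mu})$ already carries an $E(\pi)$-rational structure coming from the canonical model over $L^{\Gal}$. One can then choose an automorphic form $f$ contributing rationally to $\pi$ for which the Petersson integral
\[ \int_{G(\Q)Z(\A)\backslash G(\A)}f(g)\bar f(g)\|\nu(g)\|^{\xi}\,dg \]
takes values in $E(\pi)$; this is the essential observation of Harris (see e.g.\ \cite{harrisquebec}, Section~$5$). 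In view of the Petersson-norm interpretation of $Q^{\hol}(\pi)$ recalled above, this means $Q^{\hol}(\pi)\sim_{E(\pi)L^{\Gal}}1$, so after fixing the embedding $1\in J_{E(\pi)}$ we may simply drop it from the formula.

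Substituting both simplifications into the displayed equivalence from the first paragraph yields the stated formula. The one delicate step to verify carefully is the rationality of $Q^{\hol}(\pi)$: it rests on the existence of the canonical $L^{\Gal}$-rational structure on the zero-dimensional automorphic vector bundle together with Hypothesis~\ref{hyp:running}(3), which guarantees that the $\pi_{f}$-isotypic component in $H^{0}(S_{\C},\mathcal{E}_{\mu})$ is one-dimensional and hence admits a well-defined rational generator up to $E(\pi)^{\times}$. Everything else is a bookkeeping exercise in combining the equivalences, which is why the result reduces, in the totally definite case, to a clean expression involving only a power of $2\pi i$ and a single CM period.
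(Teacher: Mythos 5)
Your proposal is correct and follows essentially the same route as the paper: specialize formula (4.5.2) of \cite{guerbperiods}, absorb the discriminant factor, factor the two CM periods via \cite{harrisunitary} into $p(\tilde\psi;\Phi)^{n}$, and drop $Q^{\hol}(\pi)_{1}$ because the totally definite hypothesis forces it to be algebraic. The extra detail you give about why $Q^{\hol}(\pi)$ is rational (the zero-dimensional variety carrying a canonical model together with Hypothesis \ref{hyp:running}(3) guaranteeing a one-dimensional isotypic component) is a faithful elaboration of the paper's appeal to \cite{harrisquebec}, Section 5, not a genuinely different argument.
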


\section{The main theorems}\label{sec:deligne} In this section, we prove a version of Deligne's conjecture for certain twists $M(\chi)$ of realizations $M$ which are potentially automorphic in the sense that, after extending the scalars to a totally real Galois extension $K'/K$, they look like the motives (conjecturally) attached to automorphic representations of $\GL_{n}(\A_{K})$. We relate this, via base change and descent, to automorphic representations of unitary groups, and apply the results of Section \ref{sec:automorphic} to express the critical values of the corresponding $L$-functions. We then compare this expression with the one obtained in Section \ref{sec:motivic} to deduce Deligne's conjecture for automorphic realizations (Theorem \ref{thm:main}), after working over the Galois closure $L^{\Gal}$ and fixing embeddings of the coefficient fields. We then prove the theorem for potentially automorphic realizations (Theorem \ref{potentially}) by means of Brauer's induction and the previous case.

\subsection{Automorphic representations of $\GL_{n}(\A_{K})$}\label{ssec:automrealiz} Fix a totally real field $K$. Let $\Pi$ be a cuspidal automorphic representation of $\GL_{n}(\A_{K})$, satisfying the following properties:
\begin{itemize}
	\item $\Pi^{\vee}\cong\Pi$ (self-duality), and
	\item $\Pi$ is cohomological.	
\end{itemize}
The second condition can be expressed by saying that $\Pi_{\infty}$ has the same infinitesimal character as an irreducible representation of $(\Res_{K/\Q}\GL_{n,K})_{\C}\simeq\prod_{\sigma\in J_{K}}\GL_{n,\C}$. Such an irreducible representation can be parametrized, in the standard way, by a collection of integers $(a_{\sigma,1},\dots,a_{\sigma,n})_{\sigma\in J_{K}}$, called the weight of $\Pi$, with $a_{\sigma,1}\geq\dots\geq a_{\sigma,n}$ for every $\sigma\in J_{K}$. 

We let $\Q(\Pi_{f})$ be the field of definition of $\Pi_{f}$. By Theorem 3.13 of \cite{clozelmotifs}, $\Q(\Pi_{f})$ is a number field, and $\Pi_{f}$ can be defined over $\Q(\Pi_{f})$. We expect the existence of a motive $M=M(\Pi)$ over $K$, with coefficients in a finite extension $E(\Pi)\subset\bar\Q$ of $\Q(\Pi_{f})$, attached to $\Pi$ (the reason we need to allow non-trivial extensions of $\Q(\Pi_{f})$ is that the associated Galois representations may not be defined over the $\lambda$-adic completions of $\Q(\Pi_{f})$; see 1.1 of \cite{harrisant}). The motive $M$ should have rank $n$, weight $w(M)=n-1$, and it should have the property that, for $v$ outside a finite set of places $S$,
\begin{equation}\label{eqn:equalLv} L_{v}\left(s-\frac{n-1}{2},\Pi^{\varphi}\right)=L_{v}(M,s)_{\varphi},\end{equation}
where $\varphi\in J_{E(\Pi)}$. The $\lambda$-adic realizations of $M$ have been constructed by a number of people (\cite{chl}, \cite{shingalois}, \cite{ch}, \cite{sorensen}). Moreover, $M$ should be polarized (see for instance \cite{bellchen} regarding the sign of the polarization) and regular, and the Hodge numbers are recovered from the weight of $\Pi$ by the following recipe. We fix $1\in J_{E(\Pi)}$, and we have that
\begin{equation}\label{eqn:piai} p_{i}(\sigma,1)=a_{\sigma,i}+n-i \end{equation}
for every $\sigma\in J_{K}$, $i=1,\dots,n$. The other Hodge numbers $p_{i}(\sigma,\varphi)$ are obtained by a similar recipe by conjugating the weight of $\Pi$. We stress that the motives $M(\Pi)$ are conjectural.

\begin{dfn}\label{dfn:automorphic} Let $M$ be a realization of rank $n$ over $K$ with coefficients in $E$. We say that $M$ is {\em automorphic} if it is regular, polarized, pure of weight $n-1$, and there exists a self-dual, cohomological, cuspidal automorphic representation $\Pi$ of $\GL_{n}(\A_{K})$ such that (\ref{eqn:equalLv}) holds for $\varphi=1$ and $v$ outside a finite set of places $S$, and (\ref{eqn:piai}) holds. In this case we also say that $M$ is associated with $\Pi$.
	
	We say that $M$ is {\em potentially automorphic} if it is polarized and there exists a finite, totally real Galois extension $K'/K$ such that $M_{K'}=M\times_{K}K'$ is automorphic.
	
	\end{dfn}

\subsection{Transfer} Let $\Pi$ be a self-dual, cohomological, cuspidal automorphic representation of $\GL_{n}(\A_{K})$. For a totally imaginary quadratic extension $L/K$, let $\Pi_{L}$ denote the base change of $\Pi$ from $\GL_{n}(\A_{K})$ to $\GL_{n}(\A_{L})$ (see Theorems 4.2 and 5.1 of \cite{arthurclozel}). If $\Pi\not\cong\Pi\otimes\varepsilon_{L/K}$, then $\Pi_{L}$ is cuspidal. This is always the case if $n$ is odd, for instance. In any case, there always exists a totally imaginary quadratic extension $L$ of the form $L=KF$ for a quadratic imaginary field $F$, such that $\Pi_{L}$ is cuspidal (see Section 1 of \cite{clozelpurity}). From now on, we will fix an $L$ such that $\Pi_{L}$ is cuspidal. We also have that $\Pi_{L}$ is a cohomological and $\Pi_{L}^{\vee}\cong\Pi_{L}^{\iota}\cong\Pi_{L}$.

Let $G$ be a unitary group attached to an $n$-dimensional totally definite hermitian space $V$ over $L/K$, as in Section \ref{sec:automorphic}. We expect the existence of a descent $\pi$ of $\Pi_{L}$, from $\GL_{n}(\A_{L})$ to $G(\A)$. Actually, $\Pi_{L}$ should descent to an $L$-packet of representations of $G(\A)$, but for our purposes, we will just pick one of its members. In a significant number of cases, this has already been proved (\cite{labesse}; see also \cite{mok} and \cite{kmsw}).  For any $\tau\in J_{L}$, let
\[ a_{\tau,i}=a_{\sigma,i}, \]
where $\sigma=\tau|_{K}$, and let
\[ \mu=((a_{\tau,1},\dots,a_{\tau,n})_{\tau\in\Phi};0).\]
We say that $\pi$ is a {\em good descent of} $\Pi_{L}$ (or of $\Pi$) if it is cuspidal, cohomological of type $\mu$, and satisfies Hypotheses \ref{hyp:running} (with $W_{\mu}$ defined over $\Q$). The first hypothesis, \ref{hyp:running} (1), is easy to verify in this case (see \cite{guerbperiods}, Remark 5.2.1). The other two hypotheses are expected to hold, so that good descents are expected to exist. The condition that $W_{\mu}$ is defined over $\Q$ is included in \cite{guerbperiods} for simplicity of notation, and it shouldn't be hard to remove.

Let $\psi$ be an algebraic Hecke character of $L$, of infinity type $(m_{\tau})_{\tau\in J_{L}}$ and weight $w=w(\psi)$. Write
\[ \psi|_{\A_{K}^{\times}}=\psi_{0}\|\cdot\|^{-w} \]
as before, with $\psi_{0}$ of finite order. Define
\[ \chi=\psi^{2}(\psi_{0}\circ N_{L/K})^{-1}.\]
Suppose that $\pi$ is a descent of $\Pi$ to $G$, and that $M$ is an automorphic realization associated with $\Pi$. Then, for a certain finite set of places $S$, we have that
\[ L^{\mot,S} (s-w,\pi\otimes\psi,\St)=L^{S}(M(\chi),s)_{1},\]
where $1\in J_{E}$ stands for the given embedding of $E=E(\Pi)$ into $\C$ (see \cite{harriscrelle}, (3.5.2)).

\subsection{Deligne's conjecture: the automorphic case} Keep the assumptions and notation of the last subsections. In particular, $M$ is an automorphic realization associated with $\Pi$. Assume that $\psi$ is critical, and let $\Phi$ be the CM type defined by the condition
\[ m_{\tau}>m_{\bar\tau}\quad(\tau\in\Phi).\]
Furthermore, assume that
\begin{equation}\label{eqn:condition} m_{\tau}-m_{\bar\tau}>\max\{n-p_{n}(\sigma,1)\}_{\sigma\in J_{K}} \end{equation}
for any $\tau\in\Phi$. Let $(n_{\tau})_{\tau\in\Phi}$ be the infinity type of $\chi$. Then $n_{\tau}=2m_{\tau}$, and thus it satisfies equation (\ref{eqn:difgrande}) for $M$. Then, $M(\chi)$ has critical values, and the critical integers are determined by the inequalities (\ref{eqn:criticalinteger}), which become
\begin{equation}\label{eqn:enteroscriticos} a_{\sigma,1}+n-1+2m_{\bar\tau}<k\leq a_{\sigma,n}+2m_{\tau}. \end{equation}
Note that the condition (\ref{eqn:condition}) imlpies that there always exists at least one $k$ satisfying (\ref{eqn:enteroscriticos}) and $k>w+n$.
	
	At some point we need to deal with the factor $\delta(M)$. This is relatively easy to do when $n$ is even (see Remark \ref{rem:delta}), but for the moment we need the conclusion of this remark as a hypothesis when $n$ is odd. It can be proved assuming a much stronger hypothesis, namely Tate's conjecture for the realization $M$ (see \cite{guerbperiods}, 5.4).
	
	\begin{hyp}\label{hypo:delta} If $n$ is odd, then $\delta(M)_{1}\sim_{EK^{\Gal}}(2\pi i)^{-[K: \Q]\frac{n(n-1)}{2}}$.
		\end{hyp}

The following theorem is our main result in the case of automorphic realizations. We recall all the relevant hypotheses. We stress that assumption (3) below is expected to be satisfied in general, and most of what it involves is already proved in many cases. We also stress that given $M$, there always exist algebraic Hecke characters $\psi$ and integers $k$ as in the statement of the theorem.

\begin{thm}\label{thm:main} Let $M$ be an automorphic realization. Let $\psi$ be a critical algebraic Hecke character of $L$, of infinity type $(m_{\tau})_{\tau\in J_{L}}$ and weight $w$, let $\Phi$ be the CM type defined by the condition $m_{\tau}>m_{\bar\tau}$ for $\tau\in\Phi$, and let 
\[ \chi=\psi^{2}(\psi_{0}\circ N_{L/K})^{-1}.\]
Assume that
\begin{enumerate}
\item either $n$ is even, or $n$ is odd and Hypothesis \ref{hypo:delta} is satisfied,
\item $m_{\tau}-m_{\bar\tau}>\max\{n-p_{n}(\sigma,1)\}_{\sigma\in J_{K}}$ for any $\tau\in\Phi$, and
\item the automorphic representation $\Pi$ giving rise to $M$ has a good descent to a totally definite unitary group over $L/K$.
\end{enumerate}
Then Conjecture \ref{conj:weakdeligne} (the weak form of Deligne's conjecture up to $\Q(\psi)L^{\Gal}$-factors for $1\in J_{E(\chi)}$) is true for all critical integers $k>w+n$ of $M(\chi)$. That is, for such integers $k$, we have
\[ \frac{L(M(\chi),k)_{1}}{c^{+}(M(\chi)(k))_{1}}\in{(E\Q(\psi)L^{\Gal})^{\times}}. \]
	\end{thm}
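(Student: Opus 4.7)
The plan is to combine the two ``master formulas'' for the automorphic $L$-value (Proposition \ref{prop:maincritical}) and for the Deligne period (Proposition \ref{prop:exprc+k}), both of which express the relevant quantities as CM periods of algebraic Hecke characters times powers of $2\pi i$, and to show that they match modulo $E\Q(\psi)L^{\Gal}$. First I would pick a good descent $\pi$ of $\Pi_L$ to the totally definite unitary group $G$ attached to the CM type $\Phi$ (provided by hypothesis (3)), with highest weight $\mu=((a_{\sigma,1},\dots,a_{\sigma,n})_{\tau\in\Phi};0)$, so that $\xi=0$. Via \cite{harriscrelle}, (3.5.2), one has $L^{\mot,S}(s-w,\pi\otimes\psi,\St)=L^{S}(M(\chi),s)_{1}$ for a suitable finite set $S$ of places. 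Setting $m=k-w$, the hypothesis $k>w+n$ yields $m>n$, and the upper inequality in (\ref{eqn:enteroscriticos}), combined with $a_{\tau,n}=a_{\sigma,n}=p_{n}(\sigma,1)$ (via (\ref{eqn:piai})), gives $m\leq a_{\tau,n}+m_{\tau}-m_{\bar\tau}$ for every $\tau\in\Phi$. Proposition \ref{prop:maincritical} then delivers
\[ L^{S}(M(\chi),k)_{1}\sim_{E\Q(\psi)L^{\Gal}}(2\pi i)^{[K:\Q](mn-n(n-1)/2)}p(\tilde\psi;\Phi)^{n}. \]

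For the motivic side, since $\chi$ has infinity type $(2m_{\tau})_{\tau\in J_{L}}$, hypothesis (2) together with $w(M)=n-1$ and $p_{n}(\sigma,1)=a_{\sigma,n}$ verifies (\ref{eqn:difgrande}); moreover $k$ lies in the critical interval (\ref{eqn:criticalinteger}). Proposition \ref{prop:exprc+k} therefore yields
\[ c^{+}(M(\chi)(k))_{1}\sim_{E\Q(\chi)K^{\Gal}}(2\pi i)^{[K:\Q]kn}\delta(M)_{1}p(\check\chi;\Phi)^{n}, \]
and hypothesis (1) -- via Remark \ref{rem:delta} when $n$ is even, and via Hypothesis \ref{hypo:delta} when $n$ is odd -- replaces $\delta(M)_{1}$ by $(2\pi i)^{-[K:\Q]n(n-1)/2}$.

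The third step is the comparison of the two CM periods. From $\psi\psi^{\iota}=(\psi_{0}\circ N_{L/K})\|\cdot\|_{L}^{-w}$, which follows from the definition of $\psi_{0}$ and the formula $\|N_{L/K}(x)\|_{K}=\|x\|_{L}$, one directly computes $\tilde\psi=\chi\|\cdot\|_{L}^{w}$ and, using $\chi\chi^{\iota}=\|\cdot\|_{L}^{-2w}$, also $\check\chi=\chi\|\cdot\|_{L}^{2w}$. Hence $\check\chi=\tilde\psi\cdot\|\cdot\|_{L}^{w}$. By the multiplicativity of the Harris--Kudla CM periods (up to $\bar\Q^{\times}$) together with Blasius's theorem \cite{blasiusannals} on Deligne's conjecture for algebraic Hecke characters applied to the motive of $\|\cdot\|_{L}^{w}$, one obtains $p(\|\cdot\|_{L}^{w};\Phi)\sim_{\bar\Q}(2\pi i)^{-w[K:\Q]}$, and therefore $p(\check\chi;\Phi)^{n}\sim p(\tilde\psi;\Phi)^{n}\cdot(2\pi i)^{-wn[K:\Q]}$. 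Substituting this into the two expressions and using $m-k=-w$, the powers of $2\pi i$ cancel exactly, so that $L^{S}(M(\chi),k)_{1}/c^{+}(M(\chi)(k))_{1}$ lies in $E\Q(\psi)L^{\Gal}$.

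Finally, one passes from $L^{S}$ to the full $L$-function by invoking that the local Euler factors at $v\in S$, being characteristic polynomials of Frobenii acting on the $\lambda$-adic realizations of $M(\chi)$, take values in the coefficient field $E\Q(\psi)$ (up to absorbing $L^{\Gal}$) when evaluated at the integer $k$, so their removal does not affect the field relation. The main obstacle is the careful CM-period bookkeeping in the third step: identifying $p(\|\cdot\|_{L}^{w};\Phi)$ as the correct power of $2\pi i$ modulo $\bar\Q^{\times}$ and tracking the exact coefficient fields throughout, so that every $\sim$ in the chain lies in $(E\Q(\psi)L^{\Gal})^{\times}$ rather than in a larger extension.
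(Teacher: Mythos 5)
Your proposal follows the paper's proof essentially step for step: translate the automorphic $L$-value via the good descent $\pi$ and Proposition \ref{prop:maincritical}, translate the Deligne period via Proposition \ref{prop:exprc+k}, dispose of $\delta(M)$ using hypothesis (1) and Remark \ref{rem:delta} resp.\ Hypothesis \ref{hypo:delta}, and finally relate $p(\check\chi;\Phi)$ to $p(\tilde\psi;\Phi)$ through the identity $\check\chi=\tilde\psi\,\|\cdot\|_{L}^{w}$. The character identity, the verification of (\ref{eqn:ineqm}) from (\ref{eqn:enteroscriticos}) and $p_{n}(\sigma,1)=a_{\sigma,n}$, the verification of (\ref{eqn:difgrande}), and the cancellation of powers of $2\pi i$ are all as in the paper.

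The one place you deviate is the CM-period step, where you invoke ``multiplicativity up to $\bar\Q^{\times}$'' plus Blasius for $p(\|\cdot\|_{L}^{w};\Phi)$ and then (rightly) worry that $\bar\Q^{\times}$ is too coarse for the required conclusion in $(E\Q(\psi)L^{\Gal})^{\times}$. This worry is resolvable without changing your route: the multiplicativity of the Harris--Kudla periods is known not merely up to $\bar\Q^{\times}$ but up to the compositum of the coefficient fields of the characters involved (Proposition 1.4 of \cite{harrisunitary}), and $\Q(\|\cdot\|_{L}^{w})=\Q$, so $p(\|\cdot\|_{L}^{w};\Phi)\sim_{\Q^{\times}}(2\pi i)^{-w[K:\Q]}$ (this is the content of Lemma 1.8.3 of \cite{harrisunitary}, which is what the paper cites directly, no need for Blasius here). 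With these precise references your chain of $\sim$ stays inside $(E\Q(\psi)L^{\Gal})^{\times}$ throughout, and the proof is complete.
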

\begin{proof}
Recall that we defined
\[ \check\chi=\chi^{\iota,-1},\quad\tilde\psi=\frac{\psi}{\psi^{\iota}},\]
where $\iota\in\Gal(L/K)$ is the non-trivial element. Let $G$ be a totally definite unitary group as in the hypotheses, and let $\pi$ be a good descent of $\Pi$ to $G$, so that
\[ L^{\mot,S} (s-w,\pi\otimes\psi,\St)=L^{S}(M(\chi),s)_{1},\]
where $w=m_{\tau}+m_{\bar\tau}$. The remaning (finite) Euler factors, evaluated at $k\in\Z$, only affect this equation up to a multiple in the compositum $(E\Q(\chi))^{\times}\subset(E\Q(\psi))^{\times}$, so we may write
\[ L(M(\chi),k)_{1}\sim_{E\Q(\psi)}L^{\mot,S}(m,\pi\otimes\psi,\St),\]
where $m=k-w$. Our hypotheses imply that $m>n$ and 
\[ m\leq a_{\tau,n}+m_{\tau}-m_{\bar\tau} \]
for any $\tau\in\Phi$. This follows directly from the fact that $k$ is a critical integer of $M(\chi)$ and $w=m_{\tau}+m_{\bar\tau}$, so that 
\[ k\leq a_{\sigma,n}+2m_{\tau} \]
by (\ref{eqn:enteroscriticos}). Thus, all the hypotheses of Proposition \ref{prop:maincritical} are satisfied with the integer $m$, and we can write
\begin{equation}\label{exprautomorfa} L^{\mot,S}(m,\pi\otimes\psi,\St)\sim_{E\Q(\psi)L^{\Gal}}(2\pi i)^{[K: \Q]\left((k-w)n-\frac{n(n-1)}{2}\right)}p(\tilde\psi;\Phi)^{n} \end{equation}
(note that $a_{0}=0$ and thus $\xi=2a_{0}=0$ in this situation, and $E(\pi)$ was taken to contain $L^{\Gal}$ in Proposition \ref{prop:maincritical}). 

Now, note that (\ref{eqn:difgrande}) is satisfied for $\chi$, and hence, by Proposition \ref{prop:exprc+k}, we have that
\begin{equation}\label{primexprc+} c^{+}(M(\chi)(k))_{1}\sim_{E\Q(\chi)K^{\Gal}}(2\pi i)^{[K: \Q]kn}\delta(M)_{1}p(\check\chi;\Phi)^{n}.\end{equation}
By Remark \ref{rem:delta} in the case $n$ even, or by Hypothesis \ref{hypo:delta} in the case $n$ odd, we can write
\begin{equation}\label{eqn:delta} \delta(M)_{1}\sim_{EK^{\Gal}}(2\pi i)^{-[K: \Q]\frac{n(n-1)}{2}}.\end{equation}
Now, note that $\check\chi=\tilde\psi\|\cdot\|^{w}$. Using Proposition 1.4 and Lemma 1.8.3 of \cite{harrisunitary}, we obtain that
\begin{equation}\label{exprCM} p(\check\chi;\Phi)\sim_{\Q(\psi)}p(\tilde\psi;\Phi)(2\pi i)^{-[K: \Q]w}. \end{equation}
It follows by combining (\ref{primexprc+}), (\ref{eqn:delta}) and (\ref{exprCM}), that
\begin{equation}\label{segexprc+} c^{+}(M(\chi)(k))_{1}\sim_{E\Q(\psi)K^{\Gal}}(2\pi i)^{[K: \Q]\left(nk-nw-\frac{n(n-1)}{2}\right)}p(\tilde\psi;\Phi)^{n}.\end{equation}
This is exactly the right-hand side of (\ref{exprautomorfa}), which proves the theorem.
\end{proof}

\subsection{Deligne's conjecture: the potentially automorphic case} Suppose that $M$ is a realization over $K$, which becomes automorphic over $K'$, where $K'/K$ is a Galois, totally real extension contained in $\bar K$. Let $M'=M_{K'}$, and let $p_{i}^{M'}(\sigma',\varphi)$ for $\sigma'\in J_{K'}$, $\varphi\in J_{E}$ and $i=1,\dots,n$, be the Hodge numbers of $M'$. Then
\begin{equation}\label{eqn:hodge'} p_{i}^{M'}(\sigma',\varphi)=p_{i}(\sigma'|_{K},\varphi).\end{equation}
Let $\Pi'$ be the automorphic representation of $\GL_{n}(\A_{K'})$ such that $M'$ is associated with $\Pi'$. Then $\Pi'$ is cohomological of weight $(a_{\sigma',1},\dots,a_{\sigma',n})_{\sigma'\in J_{K'}}$, where
\[ a_{\sigma',i}=p_{i}(\sigma|_{K},1)+i-n.\]
This follows from (\ref{eqn:piai}) and (\ref{eqn:hodge'}).

By Brauer-Salomon's Theorem (see \cite{curtisreiner}, 15.10), there exists a finite family of intermediate fields $K\subset K_{j}\subset K'$ and integers $n_{j}\in\Z$ such that
\begin{itemize}
	\item each $\Gal(K'/K_{j})$ is solvable, and
	\item we have an isomorphism
 \begin{equation}\label{brauer}
1_{\Gal(K'/K)} \simeq \bigoplus_{j} n_{j}       \Ind_{\Gal(K'/K_{j})}^{\Gal(K'/K)}1_{\Gal(K'/K_{j})}. 
 \end{equation}
\end{itemize}
Let $M_{j}=M_{K_{j}}$. Using the Arthur-Clozel theory of base change developed in \cite{arthurclozel}, we can show that $M_{j}$ is automorphic, associated with a certain cuspidal automorphic representation $\Pi_{j}$ of $\GL_{n}(\A_{K_{j}})$. The proof of this fact uses cyclic base change, strong multiplicity one, and an argument of Harris (a nice explanation of this is given in \cite{clozelsato}, \S1). The representation $\Pi_{j}$ is a descent of $\Pi'$, meaning that its base change $\Pi_{j,K'}$ to $K'$ is isomorphic to $\Pi'$. Moreover, $\Pi_{j}$ is cohomological of weight $(a^{\Pi_{j}}_{\sigma_{j},1},\dots,a^{\Pi_{j}}_{\sigma_{j},n})_{\sigma_{j}\in J_{K_{j}}}$, where
\[ a^{\Pi_{j}}_{\sigma_{j},i}=p_{i}(\sigma_{j}|_{K},1)+ i- n.\]

For $L/K$ a totally imaginary quadratic extension, we let $L_{j}=LK_{j}$ and $L'=LK'$. Then each of the extensions $L_{j}/K_{j}$ and $L'/K'$ is a CM extension. We fix from now on $L$ with the property that $\Pi'_{L'}$ is cuspidal. We let $\tilde L$ be the compositum of the Galois closures $L_{j}^{\Gal}$. We claim that each base change $\Pi_{j,L_{j}}$ is also cuspidal. Indeed, we can suppose that $K'/K$ is cyclic of prime degree. If $\Pi_{j,L_{j}}$ is not cuspidal, then $\Pi_{j}\cong\Pi_{j}\otimes\varepsilon_{L_{j}/K_{j}}$, by Theorem 4.2 of \cite{arthurclozel}. Since the base change of $\Pi_{j}$ (resp. $\varepsilon_{L_{j}/K_{j}}$) to $K'$ is $\Pi'$ (resp. $\varepsilon_{L'/K'}$), this would imply that $\Pi'\cong\Pi'\otimes\varepsilon_{L'/K'}$, which would in turn imply by the same theorem that $\Pi'_{L'}$ is not cuspidal.

We now come to the main result of this paper. 

\begin{thm}\label{potentially} Let $M$ be a potentially automorphic realization over $K$. Let $\psi$ be a critical algebraic Hecke character of $L$ of infinity type $(m_{\tau})_{\tau \in J_{L}}$, let $\Phi$ be the CM type defined by $m_{\tau} > m_{\overline{\tau}}$ for $\tau \in \Phi$, and let 
	\[ \chi=\psi^{2}(\psi_{0}\circ N_{L/K})^{-1}.\]	
Assume that:
	\begin{itemize}
		\item[(1)] either $n$ is even, or $n$ is odd and Hypothesis \ref{hypo:delta} is satisfied for $M$,
		\item[(2)] $m_{\tau}- m_{\overline{\tau}}> \max\left\lbrace n- p_{n}(\sigma, 1)\right\rbrace_{\sigma \in J_{K}}$ for any $\tau \in \Phi$, and
		\item[(3)] for each $j$ the automorphic representation $\Pi_{j}$ has a good descent to a totally definite unitary group over $L_{j}/K_{j}$.
		
	\end{itemize} 
	Then Conjecture \ref{conj:weakdeligne} (the weak form of Deligne's conjecture up to $\Q(\psi)\tilde L$-factors for $1\in J_{E(\chi)}$) is true for all critical integers $k>w+n$ of $M(\chi)$. That is, for such integers $k$, we have
	\[ \frac{L(M(\chi),k)_{1}}{c^{+}(M(\chi)(k))_{1}}\in{(E\Q(\psi)\tilde L)^{\times}}. \]
\end{thm}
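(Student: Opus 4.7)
The plan is to use Brauer's induction together with Arthur--Clozel solvable base change to reduce to the automorphic case (Theorem \ref{thm:main}) applied to each $M_{j}=M_{K_{j}}$, and then to match the resulting product of Deligne periods to $c^{+}(M(\chi)(k))_{1}$ via Proposition \ref{prop:exprc+k} combined with Blasius's theorem on Deligne's conjecture for CM motives.

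Concretely, the first move is to invoke the Brauer decomposition (\ref{brauer}) and Artin formalism for $L$-functions of compatible systems to write
\[ L(M(\chi),s)=\prod_{j}L(M_{j}(\chi_{L_{j}}),s)^{n_{j}}, \]
where $\chi_{L_{j}}=\chi\circ N_{L_{j}/L}$ and $\psi_{L_{j}}=\psi\circ N_{L_{j}/L}$ are the base-changed Hecke characters on $L_{j}$. As explained before the statement of the theorem, each $M_{j}$ is automorphic, associated with a $\Pi_{j}$ whose base change to $L_{j}$ is cuspidal, so hypothesis $(3)$ of the theorem provides the analogue of hypothesis $(3)$ of Theorem \ref{thm:main} for each $M_{j}$. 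Hypothesis $(2)$ for $(M,\psi)$ transfers to $(M_{j},\psi_{L_{j}})$ because the Hodge numbers satisfy $p_{n}^{M_{j}}(\sigma_{j},1)=p_{n}(\sigma_{j}|_{K},1)$, the restriction map $J_{K_{j}}\twoheadrightarrow J_{K}$ is surjective, and the infinity type of $\psi_{L_{j}}$ at $\tau_{j}$ equals $m_{\tau_{j}|_{L}}$. Similarly, Hypothesis \ref{hypo:delta} for $M$ should yield the analogous property for each $\delta(M_{j})_{1}$. Applying Theorem \ref{thm:main} to each $(M_{j},\psi_{L_{j}},k)$ and taking the product with multiplicities $n_{j}$, and using $\Q(\psi_{L_{j}})\subset\Q(\psi)$ and $L_{j}^{\Gal}\subset\tilde L$, I obtain
\[ L(M(\chi),k)_{1}\sim_{E\Q(\psi)\tilde L}\prod_{j}c^{+}(M_{j}(\chi_{L_{j}})(k))_{1}^{n_{j}}. \]

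It then remains to identify this product with $c^{+}(M(\chi)(k))_{1}$ up to an element of $(E\Q(\psi)\tilde L)^{\times}$. For this I would apply Proposition \ref{prop:exprc+k} to $M$ over $K$ and to each $M_{j}$ over $K_{j}$, using Remark \ref{rem:delta} or Hypothesis \ref{hypo:delta} to replace every $\delta$-factor by a power of $2\pi i$; the resulting exponents of $2\pi i$ combine correctly because Brauer's identity at the level of dimensions gives $\sum_{j}n_{j}[K_{j}:\Q]=[K:\Q]$. What is left is the CM-period identity
\[ \prod_{j}p(\check\chi_{L_{j}};\Phi_{j})^{nn_{j}}\sim_{\Q(\psi)\tilde L}p(\check\chi;\Phi)^{n}, \]
where $\Phi_{j}\subset J_{L_{j}}$ is the CM type of $L_{j}$ lying above $\Phi$. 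This is the step I expect to be the main obstacle. The idea is to derive it from Blasius's formula (\ref{form:blasius}): both factors can be re-expressed, up to $\Q(\psi)\tilde L$-multiples and powers of $2\pi i$, as Deligne $c^{+}$-periods of suitable Tate twists of $\Res_{L/K}[\check\chi]$ and $\Res_{L_{j}/K_{j}}[\check\chi_{L_{j}}]$ respectively, and a second application of the Brauer $L$-function decomposition, now to the CM motive $\Res_{L/K}[\check\chi]$ whose Deligne conjecture is fully known by \cite{blasiusannals}, supplies the required identity. Combining everything yields $L(M(\chi),k)_{1}\sim_{E\Q(\psi)\tilde L}c^{+}(M(\chi)(k))_{1}$, which is the statement of the theorem.
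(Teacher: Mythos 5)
Your proposal follows essentially the same route as the paper's proof: Brauer induction plus Arthur--Clozel base change reduces to Theorem \ref{thm:main} for each $M_{j}$, the $\delta$-factors and $(2\pi i)$-powers are matched via the Brauer identity $\sum_{j}n_{j}[K_{j}:\Q]=[K:\Q]$ (equivalently $\sum_{j}n_{j}[K_{j}:K]=1$ and $\delta(M_{j})_{1}\sim\delta(M)_{1}^{[K_{j}:K]}$), and the CM-period identity $\prod_{j}p(\check\chi_{j};\Phi_{j})^{n_{j}}\sim p(\check\chi;\Phi)$ is obtained exactly as you propose, by combining Blasius's theorem with a second application of the Brauer $L$-function decomposition to the rank-one CM motive (the paper phrases this as the case $M=\Q(0)$). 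The only piece you elide is the comparison of $\lambda$-adic realizations showing $L(M(\chi)_{K_{j}},s)_{1}=L(M_{j}(\chi_{j}),s)_{1}$ (the paper's Lemma \ref{cambio base caracter}, a short induction-restriction computation), but this is implicit in your identification $\chi_{L_{j}}=\chi\circ N_{L_{j}/L}$.
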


\begin{proof}  
	From (\ref{brauer}) we deduce a formal equality between the $\lambda$-realizations of  $M(\chi)$ and $\Res_{K_{j}/K}(M(\chi)_{K_{j}})$, which implies the following equality of $L$-functions:
	\begin{equation} \label{A}
	L( M(\chi), s)_{1}= \prod_{j}L( M(\chi)_{K_{j}}, s)_{1}^{n_{j}}.
	\end{equation}

	We denote by $\psi_{j}$ the Hecke character of $L_{j}$ obtained from $\psi$ by base change. Thus, 
	\[ \psi_{j}= \psi\circ N_{L_{j}/L}.\]
	We define $\chi_{j}$ in the same way as $\chi$ was constructed from $\psi$. Then $\chi_{j}= \chi\circ N_{L_{j}/L}$. Before continuing with the proof of the theorem, we need a lemma. Recall that we use the notation $M(\chi)= M\otimes\Res_{L/ K}[\chi]$, $M_{j}=M_{K_{j}}$, and $M_{j}(\chi_{j})= M_{j}\otimes\Res_{L_{j}/ K_{j}}[\chi_{j}]$. 
	\begin{lemma} \label{cambio base caracter} We have an equality of $L$-functions 
		\[ L( M(\chi)_{K_{j}}, s)_{1}= L(M_{j}(\chi_{j}), s)_{1}.\]

	\begin{proof} First note that $M(\chi)_{K_{j}}= M_{j}\otimes(\Res_{L/ K}[\chi])_{K_{j}}$. Then it is enough to verify that $(\Res_{L/ K}[\chi])_{K_{j}}$ and $\Res_{L_{j}/ K_{j}}[\chi_{j}]$ have the same $\lambda$-adic realizations for each finite place $\lambda$ of $\Q(\chi)$. Denote by $\chi_{\lambda}$ and $\chi_{j, \lambda}$ the $\lambda$-adic realizations of $\chi$ and $\chi_{j}$ respectively. By definition, 
		\[\left((\Res_{L/ K}[\chi])_{K_{j}}\right)_{\lambda}= \Ind_{\Gamma_{L}}^{\Gamma_{K}}(\chi_{\lambda})\mid_{\Gamma_{K_{j}}}\]and
		\[\left(\Res_{L_{j}/ K_{j}}[\chi_{j}]\right)_{\lambda}= \Ind_{\Gamma_{L_{j}}}^{\Gamma_{K_{j}}}(\chi_{j, \lambda}).\]
		The proof of the lemma finishes by considering the following $\Gamma_{K_{j}}$-equivariant isomorphism: 
		\[\Ind_{\Gamma_{L}}^{\Gamma_{K}}(\chi_{\lambda})\mid_{\Gamma_{K_{j}}} \longrightarrow \Ind_{\Gamma_{L_{j}}}^{\Gamma_{K_{j}}}(\chi_{j, \lambda}) \ , \ f \longmapsto f\mid_{\Gamma_{K_{j}}}.
		\]
		\end{proof}
		\end{lemma}
	
Returning to the proof of the theorem, we claim now that the hypotheses of Theorem \ref{thm:main} are satisfied for the motive $M_{j}$, associated with $\Pi_{j}$, the CM extension $L_{j}/K_{j}$, the character $\psi_{j}$ and the integer $k$.

Note that $\psi_{j}$ has infinity type $(m_{\tau}^{\psi_{j}})_{\tau\in J_{L_{j}}}$, where $m_{\tau}^{\psi_{j}}=m_{\tau|_{L}}$. In particular, $\psi_{j}$ is critical. If we let $\Phi_{j}$ be the CM type of $L_{j}/K_{j}$ determined by $m_{\tau}^{\psi_{j}}>m_{\bar\tau}^{\psi_{j}}$ for $\tau\in\Phi_{j}$, then $\Phi_{j}$ consists of those embeddings $\tau\in J_{L_{j}}$ such that $\tau|_{L}\in\Phi$. 

Concerning hypothesis (1) of Theorem \ref{thm:main}, note that $\Res_{K_{j}/K}M_{j}\cong M^{[K_{j}:K]}$, as can be easily checked. It then follows that 
\[ \delta(M_{j})_{1}\sim\delta(M)_{1}^{[K_{j}:K]}.\]
Thus, hypothesis (1) is satisfied for $M_{j}$ if $n$ is odd. 

For hypothesis (2), note that if $\tau\in\Phi_{j}$, then $m_{\tau}^{\psi_{j}}-m_{\bar\tau}^{\psi_{j}}=m_{\tau|_{L}}-m_{\bar\tau|_{L}}$. Since $\tau|_{L}$, assumption (2) for $M$ and $\psi$ says that this is strictly larger than $\max\{n-p_{n}(\sigma,1)\}_{\sigma\in J_{K}}$. Then, using relation (\ref{eqn:hodge'}) with $K_{j}$ instead of $K'$, we have that
\[ m_{\tau}^{\psi_{j}}-m_{\bar\tau}^{\psi_{j}}>\max\{n-p_{n}^{M_{j}}(\sigma,1)\}_{\sigma\in J_{K_{j}}},\quad(\tau\in\Phi_{j}),\]
	where we are denoting the Hodge numbers of $M_{j}$ by $p_{i}^{M_{j}}(\sigma,1)$.
	
	Hypothesis (3) of Theorem \ref{thm:main} for $\Pi_{j}$ is already within our assumptions. Finally, suppose that $k>w+n$ is a critical integer for $M(\chi)$. Recall from (\ref{eqn:criticalinteger}) that being critical means that
	\[ p_{1}(\sigma,1)+2m_{\bar\tau}<k\leq p_{n}(\sigma,1)+2m_{\tau} \]
	for every $\tau\in\Phi$. Then, by (\ref{eqn:hodge'}) and the fact that $m_{\tau}^{\psi_{j}}=m_{\tau|_{L}}$, we have that
	\[ p_{1}^{M_{j}}(\sigma,1)+2m_{\bar\tau}^{\psi_{j}}<k\leq p_{n}^{M_{j}}(\sigma,1)+2m_{\tau}^{\psi_{j}} \]
	for every $\tau\in\Phi_{j}$. Thus, we are under the conditions of Theorem \ref{thm:main}, and using Lemma \ref{cambio base caracter}, we obtain that
	\begin{equation} \label{B}
	L( M(\chi)_{K_{j}}, k)_{1}= L( M_{j}(\chi_{j}), k)_{1} \sim_{E\Q(\psi)L_{j}^{\Gal}} c^{+}(M_{j}(\chi_{j})(k))_{1}.
	\end{equation} 
	
	Now, the triple $(M_{j}, \chi_{j}, k)$ satisfies the hypotheses of Proposition \ref{prop:exprc+k}. Thus,
	\begin{equation}\label{C}
	c^{+}(M_{j}(\chi_{j})(k))_{1} \sim_{E\Q(\psi)K_{j}^{\Gal}} (2\pi i)^{[K_{j}:\Q]kn} \delta(M_{j})_{1}p(\check{\chi}_{j};\Phi_{j})^{n}.
	\end{equation}
	
	From (\ref{A}), (\ref{B}) and (\ref{C}) we obtain: 
	\begin{equation} \label{producto}
	L( M(\chi), k)_{1} \sim_{E\Q(\psi)\tilde L} (2\pi i)^{\sum_{j}n_{j}[K_{j}:\Q]kn} \prod_{j}\delta(M_{j})_{1}^{n_{j}}\left(\prod_{j}p(\check{\chi}_{j}; \Phi_{j})^{n_{j}}\right)^{n}.
	\end{equation}
	From (\ref{brauer}) we deduce that $[K:\Q]= \sum_{j}n_{j}[K_{j}:\Q]$, so we obtain that
	\begin{equation}\label{compatibilidad 2 pi i}
	(2\pi i)^{\sum_{j}n_{j}[K_{j}:\Q]kn}= (2\pi i)^{[K: \Q]kn}
	\end{equation}
	As we noted above, $\delta(M_{j})_{1}\sim\delta(M)_{1}^{[K_{j}:K]}$. Then, from the formula $1= \sum_{j}n_{j}[K_{j}: K]$, we deduce that
	\begin{equation}\label{compatibilidad delta}
	\prod_{j}\delta(M_{j})_{1}^{n_{j}}\sim_{E\tilde L} \delta(M)_{1}.
	\end{equation}
	
Finally, we claim that
	\begin{equation} \label{compatibilidad CM}
	\left(\prod_{j}p(\check{\chi}_{j}; \Phi_{j})^{n_{j}}\right)\sim_{\Q(\chi)\tilde L} p(\check{\chi}; \Phi).
	\end{equation}
	Indeed, all the characters $\chi_{j}$ and $\chi$ are critical, and the set of critical integers is the same for all (this follows for example from (\ref{eqn:criticalinteger}) taking $M=\Q(0)$). Fix $m\in\Z$ such an integer. Then the results of Blasius (\cite{blasiusannals}) ((\ref{form:blasius}) and Deligne's conjecture for the motives $[\chi]$ and $[\chi_{j}]$) imply that
	\[ L(\chi,m)_{1}\sim_{\Q(\psi)\tilde L}(2\pi i)^{[K:\Q]m}p(\check\chi;\Phi) \]
	and
	\[ L(\chi_{j},m)_{1}\sim_{\Q(\psi)\tilde L}(2\pi i)^{[K_{j}:\Q]m}p(\check\chi_{j};\Phi_{j}).\]
	Then (\ref{compatibilidad CM}) follows from formula (\ref{A}) and Lemma \ref{cambio base caracter} for $M=\Q(0)$.
	
	From (\ref{producto}), (\ref{compatibilidad 2 pi i}), (\ref{compatibilidad delta}), (\ref{compatibilidad CM}) and Proposition \ref{prop:exprc+k} for the triple $(M,\chi,k)$, we obtain that
	\[ L( M(\chi), k)_{1} \sim_{E\Q(\psi)\tilde L}(2\pi i)^{[K: \Q]kn}\delta(M)_{1}p(\check\chi; \Phi) \sim_{E\Q(\psi)L'} c^{+}(M(\chi)(k))_{1},\]
which ends the proof of the theorem.
\end{proof}

\section{Remarks about quadratic periods}\label{sec:quadratic}
\subsection{Quadratic periods and critical values} In this subsection, we draw some final remarks about quadratic periods. We plan to apply these ideas in a future project involving $p$-adic interpolation and $p$-adic $L$-functions. Let $M$ be an automorphic realization of rank $n$ over $K$ with coefficients in $E$. In this section we suppose that \emph{$n$ is even}. We will use the same notation as in Section \ref{sec:deligne}. Let $r \in \left\lbrace \frac{n}{2}+ 1,\dots,n- 1 \right\rbrace$ and suppose that:  
\begin{equation}
a_{\sigma,r} \equiv a_{\sigma',r} (2), \ \ \mathrm{for \ all} \ \sigma, \sigma' \in J_{K}.
\end{equation}
This hypothesis is equivalent to $p_{r}(\sigma,1)\equiv p_{r}(\sigma,1) (2)$ for all  $\sigma, \sigma' \in J_{K}$. As $n$ is even and the weight of $M$ is $n-1$, using Proposition \ref{prop:existechi} we can construct an algebraic Hecke character $\psi^{(r,1)}$ of $L$ of infinity type $(m_{\tau})_{\tau \in J_{L}}$, such that
\[ m_{\tau}-m_{\bar\tau}=\frac{n}{2}-p_{r}(\sigma,1)=\frac{n}{2}-a_{\sigma,r}-s\quad(\tau\in\Phi).\]

As before, we let
\[ \chi^{(r,1)}=(\psi^{(r,1)})^{2}(\psi^{(r,1)}_{0}\circ N_{L/K}), \]
so that its infinity type is $(n_{\tau})_{\tau\in J_{L}}$, with $n_{\tau}=2m_{\tau}$ and
\[ n_{\tau}-n_{\bar\tau}=n-2p_{r}(\sigma,1)=n-2s-2a_{\sigma,r}\quad(\tau\in\Phi).\]

Let $\sigma \in J_{K}$ and suppose that
\begin{equation}
a_{\sigma,r+ 1} \equiv a_{\sigma,r}+ 1 (2), \ \ \mathrm{for \ all} \ r \in \left\lbrace \frac{n}{2}+ 1,\dots, n- 1 \right\rbrace.
\end{equation}
This condition allows us to choose $\psi^{(\frac{n}{2}+ 1,1)}, \dots, \psi^{(n-1 ,1)}$ such that each $\chi^{(r,1)}$, for $r= \frac{n}{2}+ 1,\dots, n- 1$, has weight $w_{0}$, independent of $r$, and satisfying:
\begin{itemize}
\item[a)] $w_{0} \equiv n (2)$,
\item[b)] $2r- n$ is a divisor of $w_{0}$ for any $r \in \left\lbrace \frac{n}{2}+ 1,\dots, n- 1 \right\rbrace$, and
\item[c)] $4$ is a divisor of $w_{0}+ n$.
\end{itemize}

\begin{lemma} Let $r \in \left\lbrace \frac{n}{2}+ 1,\dots, n- 1 \right\rbrace$. Then the only integer $m \in \Z$ which is critical for $M(\chi^{(r,1)})$ is $m= \frac{n+ w_{0}}{2}$.
\end{lemma}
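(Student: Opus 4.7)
The plan is to compute the critical integers of $M(\chi^{(r,1)})$ directly from its Hodge structure at each $\sigma\in J_K$, using the fact that $\chi^{(r,1)}$ has been engineered so that certain Hodge slopes become $\sigma$-independent. The motive has weight $w:=w(M)+w_0=n-1+w_0$, and the infinity type of $\chi^{(r,1)}$ satisfies $n_\tau+n_{\bar\tau}=w_0$ together with $n_\tau-n_{\bar\tau}=n-2p_r(\sigma,1)$ for $\tau\in\Phi$ extending $\sigma$.

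The first concrete step is to solve the two linear relations on $n_\tau, n_{\bar\tau}$: this yields $n_\tau=\tfrac{w_0+n}{2}-p_r(\sigma,1)$ and $n_{\bar\tau}=\tfrac{w_0-n}{2}+p_r(\sigma,1)$, and hence the two $\sigma$-independent identities
\[
p_r(\sigma,1)+n_\tau=\tfrac{w_0+n}{2},\qquad p_{n+1-r}(\sigma,1)+n_{\bar\tau}=\tfrac{w_0+n}{2}-1,
\]
where the second uses $p_{n+1-r}=n-1-p_r$. These will pin down the unique critical integer.

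Next, I would write the Hodge types of $M(\chi^{(r,1)})$ at $\sigma$ as $(p_i(\sigma,1)+n_\tau,p_{n+1-i}(\sigma,1)+n_{\bar\tau})$ for $i=1,\dots,n$ together with their complex conjugates. A short parity check rules out any $(p,p)$-type (it would force $2p_i=2p_r-1$), so the classical criticality criterion applies: $m$ is critical iff $\min(p,q)<m\le\max(p,q)$ for every Hodge type $(p,q)$. I then split into the cases $i\le r$ and $i>r$: the difference $(p_i+n_\tau)-(p_{n+1-i}+n_{\bar\tau})=2(p_i-p_r)+1$ is $\ge 1$ in the first case and $\le -1$ in the second, so the inequality type is always decided. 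The strongest lower and upper bounds come from $i=r$ (and by regularity, $i=r+1$ only ever contributes weaker bounds via $p_{r+1}\le p_r-1$ and $p_{n-r}\ge p_{n+1-r}+1$).

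Carrying this out, the intersection over all $\sigma\in J_K$ of the constraints collapses to
\[
\tfrac{w_0+n}{2}-1<m\le\tfrac{w_0+n}{2},
\]
with every $\sigma$-dependent inequality being automatic. Assumption (a), $w_0\equiv n\pmod 2$, makes $(w_0+n)/2$ an integer, and it is the only integer in the window, giving the lemma. The only mildly delicate point in executing this plan is the bookkeeping to make sure that the $i=r$ and $i=r+1$ constraints really dominate the remaining ones (so that no other index $i$ narrows the window further); but this follows immediately from the strict monotonicity of the Hodge numbers $p_1>\dots>p_n$ together with the symmetry $p_i+p_{n+1-i}=n-1$.
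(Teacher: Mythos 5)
Your proof is correct and follows essentially the same route as the paper's: both determine the critical integers from the Hodge data of $M(\chi^{(r,1)})$ using the symmetry $p_i+p_{n+1-i}=n-1$ and the monotonicity of the Hodge numbers, with the constraints from $i=r$ and $i=r+1$ pinning the window to $\bigl(\tfrac{w_0+n}{2}-1,\tfrac{w_0+n}{2}\bigr]$. The paper simply delegates the computation to the formulas in Subsection~5.3 of the reference (phrased in terms of the automorphic weights $a_{\sigma,i}=p_i(\sigma,1)-n+i$), whereas you carry it out in a self-contained way directly from the Hodge types, which is a harmless and somewhat more explicit variant of the same argument.
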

\begin{proof} Recall that in this case we have $p_{r}(\sigma, 1)= a_{\sigma, r}+ n- r$ for each $\sigma \in J_{K}$. Now we apply the formulas obtained in Subsection 5.3 of \cite{guerbperiods} which determine the set of critical integers, considering the following facts: i) $a_{\sigma,1} \geq ... \geq a_{\sigma, n}$, ii) $a_{\sigma,r}+ a_{\sigma,n-r+1}= 0$ and iii) $m_{\tau}^{(r,1)}- m_{\overline{\tau}}^{(r,1)}= \frac{n- 2p_{r}(\sigma, 1)}{2}$. \end{proof}

Let $F$ be the number field generated by the image of $\chi^{(r,1)}$ on the finite ad\`eles $\A_{L,f}^{\times}$ for $r= \frac{n}{2}+ 1,\dots, n- 1$ (that is, we take the compositum of the $\Q(\chi^{(r,1)})$ for $r= \frac{n}{2}+ 1,\dots, n- 1$).

\begin{prop} \label{qj formula} Let $j \in \left\lbrace 2,\dots, \frac{n}{2}- 1\right\rbrace $. Suppose that Conjecture \ref{conj:weakdeligne} up to $FK^{\Gal}$-factors for the embedding $1$ is true for $M(\chi^{(n-j, 1)})$ and $M(\chi^{(n-j+ 1, 1)})$ for the critical integer $\frac{n+ w_{0}}{2}$. Then
\[Q_{j,1} \sim_{EFK^{\Gal}}\]
\[\frac{L(M(\chi^{(n-j, 1)}), \frac{n+ w_{0}}{2})_{1}}{L(M(\chi^{(n-j+ 1, 1)}), \frac{n+ w_{0}}{2})_{1}}\frac{L(\chi^{(n-j+ 1, 1)}, -\frac{w_{0}(j-1)}{n- 2(j-1)})^{n- 2(j-1)}}{L(\chi^{(n-j, 1)}, -\frac{w_{0}j}{n- 2j})^{n- 2j}}\frac{G(\chi^{(n-j+ 1, 1)})_{1}^{j- 1}}{G(\chi^{(n-j, 1)})_{1}^{j}}\]
\end{prop}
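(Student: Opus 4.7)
The plan is to start from Corollary \ref{coro:Qjota}, which for $j\geq 2$ and the given embedding $1\in J_{E}$ yields
\[
Q_{j,1}\sim_{EFK^{\Gal}}\frac{c^{+}(M(\chi^{(n-j,1)}))_{1,1}}{c^{+}(M(\chi^{(n-j+1,1)}))_{1,1}}\cdot\frac{P(\chi^{(n-j+1,1)})_{1}}{P(\chi^{(n-j,1)})_{1}},
\]
and to convert each $c^{+}$-factor into a critical $L$-value of $M(\chi^{(r,1)})$ via the assumed Conjecture \ref{conj:weakdeligne}, while expanding each $P$-factor via Lemma \ref{lemma:interpPchiCM} and then turning the resulting CM periods into $L$-values of the Hecke characters via Blasius's theorem.

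For the $c^{+}$-ratio, I would combine Conjecture \ref{conj:weakdeligne} at the critical integer $m=(n+w_{0})/2$ with the relation $c^{+}(N(k))\sim(2\pi i)^{[K:\Q]n^{+}(N)k}c^{+}(N)$ valid when $k$ is even. Condition (c), $4\mid n+w_{0}$, ensures that $m$ is even; and a direct inspection of the Frobenius action on the Betti realization of $M\otimes\Res_{L/K}[\chi^{(r,1)}]$, where $F_{\sigma}$ swaps the two embeddings of $L$ extending $\sigma$, shows that $n^{+}(M(\chi^{(r,1)}))=n^{+}_{M}+n^{-}_{M}=n$, independently of $r$. The $(2\pi i)$-powers therefore cancel in the ratio, yielding
\[
\frac{c^{+}(M(\chi^{(n-j,1)}))_{1,1}}{c^{+}(M(\chi^{(n-j+1,1)}))_{1,1}}\sim_{EFK^{\Gal}}\frac{L(M(\chi^{(n-j,1)}),m)_{1}}{L(M(\chi^{(n-j+1,1)}),m)_{1}}.
\]

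For the $P$-ratio, applying Lemma \ref{lemma:interpPchiCM} with $s=n-r$ to both $r=n-j$ and $r=n-j+1$ and dividing produces a factor $(2\pi i)^{[K:\Q]w_{0}}$ together with the $G$-factor $G(\chi^{(n-j+1,1)})_{1}^{j-1}/G(\chi^{(n-j,1)})_{1}^{j}$ and the CM-period ratio $p(\check\chi^{(n-j+1,1)};\Phi)^{n-2(j-1)}/p(\check\chi^{(n-j,1)};\Phi)^{n-2j}$. To replace each $p(\check\chi^{(r,1)};\Phi)$ by an $L$-value, I would combine \eqref{form:blasius} with Deligne's conjecture for $[\chi^{(r,1)}]$, proved in \cite{blasiusannals}, which gives $p(\check\chi^{(r,1)};\Phi)\sim_{FK^{\Gal}}(2\pi i)^{-[K:\Q]m_{0}}L(\chi^{(r,1)},m_{0})$ for any critical integer $m_{0}$. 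The key step is to pick $m_{0}=t_{j}:=-w_{0}j/(n-2j)$ for $\chi^{(n-j,1)}$ and $m_{0}=t_{j-1}:=-w_{0}(j-1)/(n-2(j-1))$ for $\chi^{(n-j+1,1)}$; condition (b) guarantees these are integers, and raising to the exponents $n-2j$ and $n-2(j-1)$ produces an overall $(2\pi i)$-factor $(2\pi i)^{-[K:\Q]w_{0}}$ that exactly cancels the one coming from Lemma \ref{lemma:interpPchiCM}. Multiplying the two simplified ratios then reproduces the right-hand side of the proposition.

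The main obstacle is checking that $t_{j}$ and $t_{j-1}$ actually lie in the critical strip of $\chi^{(n-j,1)}$ and $\chi^{(n-j+1,1)}$ respectively. This reduces to verifying the inequalities $n_{\bar\tau}<t\leq n_{\tau}$ for every $\sigma\in J_{K}$, using the infinity type $n_{\tau}-n_{\bar\tau}=n-2p_{r}(\sigma,1)$ of $\chi^{(r,1)}$ together with conditions (a)--(c) on $w_{0}$, which are precisely calibrated so that these inequalities hold and so that the $(2\pi i)$-cancellations go through. Once this bookkeeping is in place, the proposition follows by direct multiplication of the two pieces.
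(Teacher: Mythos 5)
Your proposal matches the paper's own proof essentially step for step: start from Corollary~\ref{coro:Qjota}, convert the $c^{+}$-ratio to an $L$-ratio via the assumed Conjecture~\ref{conj:weakdeligne} at $m=\frac{n+w_0}{2}$ (with conditions (a) and (c) handling the parity and the Tate-twist $(2\pi i)$-factor), and rewrite each $P(\chi^{(r,1)})_1$ via Lemma~\ref{lemma:interpPchiCM} and Blasius's theorem, choosing the critical integer $m_r=-w_0(n-r)/(2r-n)$ precisely so that the powers of $2\pi i$ cancel. Your bookkeeping with the exponents and Gauss sums reproduces exactly the intermediate formula $P(\chi^{(r,1)})_{1}\sim L\bigl(\chi^{(r,1)},-w_{0}(n-r)/(2r-n)\bigr)^{2r-n}G(\chi^{(r,1)})_{1}^{n-r}$ that the paper records, and multiplying gives the claimed expression for $Q_{j,1}$. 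You are also right to flag that one must verify $t_j$, $t_{j-1}$ are actually critical for the respective Hecke characters, a check the published proof omits without comment; apart from this extra care your argument is the same one.
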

\begin{proof} First note that for each $r=0,\dots,n$, by Deligne's conjecture for algebraic Hecke characters (proved by Blasius in \cite{blasiusannals}) and Lemma \ref{lemma:interpPchiCM}, we obtain that
\[ P(\chi^{(r,1)})_{1}\sim_{\Q(\chi^{(r,1)})L^{\Gal}} L\left( \chi^{(r,1)}, \dfrac{-w_{0}(n-r)}{2r- n}\right) ^{2r- n} G(\chi)_{1}^{n- r}. \]
Then the proof finishes by using this formula, Corollary \ref{coro:Qjota} and the hypothesis. \end{proof}

\begin{rem} When $j=1$, we deduce an analogous formula to that of Proposition \ref{qj formula}.
\end{rem}

\begin{rem} In Subsection 3.10 of \cite{guerbperiods}, the author defines \emph{automorphic quadratic periods}. A particular case of this is the period denoted by $Q^{\hol}(\pi)$ in Section \ref{sec:automorphic} above, but in arbitrary signatures they can be defined for automorphic forms contributing in coherent cohomology to other non-holomorphic degrees. Proposition \ref{qj formula}, combined with the comparisons in 5.4 of {\em op. cit.}, suggests an expression for these periods in terms of critical values of automorphic $L$-functions. This formula should be useful, for example, in questions on $p$-adic interpolation related with automorphic quadratic periods.
\end{rem}


\bibliography{criticalpotentially.bib} \bibliographystyle{amsplain}

\end{document}